\newcommand*\cocolon{%
        \nobreak
        \mskip6mu plus1mu
        \mathpunct{}%
        \nonscript
        \mkern-\thinmuskip
        {:}%
        \mskip2mu
        \relax
}
\newenvironment{newlist}
   {\begin{list}{}{\setlength{\labelsep}{0.25cm}
\setlength{\itemsep}{-0.1cm}
\setlength{\topsep}{0.1cm}
                   \setlength{\labelwidth}{0.7cm}
                      \setlength{\leftmargin}{0.8cm}}}  
   {\end{list}}
\newcommand{\op}[1]{\mathbb{#1}}
\newcommand{\ra}{\rightarrow}
\newcommand{\lan}{\langle}
\newcommand{\ran}{\rangle}
\newcommand{\e}{{\rm e}}
\newcommand{\mt}{\land}
\newcommand{\jn}{\lor}
\newcommand{\rd}{\slash}
\newcommand{\ld}{\backslash}
\newcommand{\eq}{\approx}
\newcommand{\m}[1]{{\bf {#1} }}
\newcommand{\si}{\ensuremath{\sigma}}
\DeclareMathOperator{\Con}{\mathrm{Con}}
\DeclareMathOperator{\KCon}{\mathrm{KCon}}
\newcommand{\cg}[1]{{\rm Cg}_{_{#1}}}
\newcommand{\cls}[1]{\mathcal{#1}}
\newcommand{\mdl}[1]{\models_{#1}}
\newcommand{\lgc}[1]{\ensuremath{{\sf #1}}}
\newcommand{\De}{\Delta}
\newcommand{\Si}{\Sigma}
\newcommand{\The}{\Theta}
\newcommand{\F}{\m{F}}
\newcommand{\Tm}{\m{Tm}}
\newcommand{\lang}{\mathcal{L}}
\newcommand{\x}{\overline{x}}
\newcommand{\y}{\overline{y}}
\newcommand{\z}{\overline{z}}
\newcommand{\csts}{\overline{c}}
\newcommand{\eps}{\ensuremath{\varepsilon}}
\newcommand{\N}{\mathbb{N}}
\newcommand{\V}{\mathcal{V}}
\newcommand{\W}{\mathcal{W}}
\newcommand{\K}{\mathcal{K}}
\newcommand{\C}{\mathcal{C}}
\newcommand{\Lat}{\lang\mathit{at}}
\newcommand{\FL}{\mathrm{FL}}
\theoremstyle{definition}
\newtheorem{Theorem}{Theorem}
\newtheorem{Proposition}[Theorem]{Proposition}
\newtheorem{Lemma}[Theorem]{Lemma}
\newtheorem{Example}[Theorem]{Example}
\newtheorem{Corollary}[Theorem]{Corollary}
\numberwithin{Theorem}{section}
\journal{Annals of Pure and Applied Logic} 
\begin{document}

\begin{frontmatter}

\title{Uniform Interpolation and Coherence\tnoteref{thanks}}

\author{Tomasz Kowalski}
\address{Department of Mathematics and Statistics, La Trobe University, Melbourne, Australia}
\ead{T.Kowalski@latrobe.edu.au}

\author{George Metcalfe}
\address{Mathematical Institute, University of Bern, Switzerland}
\ead{george.metcalfe@math.unibe.ch}

\tnotetext[thanks]{This project has received funding from the European Union's Horizon 2020 research and innovation programme under the Marie Sk{\l}odowska-Curie grant agreement No 689176.}


\begin{abstract}
A variety $\V$ is said to be coherent if any finitely generated subalgebra of a finitely presented member of $\V$ is  finitely presented. It is shown here that $\V$ is coherent if and only if it satisfies a restricted form of uniform deductive interpolation: that is, any compact congruence on a finitely generated free algebra of $\V$ restricted to a free algebra over a subset of the generators is again compact. A general criterion is obtained for establishing failures of coherence, and hence also of uniform deductive interpolation. This criterion is then used in conjunction with properties of canonical extensions to prove that coherence and uniform deductive interpolation fail for certain varieties of Boolean algebras with operators (in particular, algebras of modal logic $\lgc{K}$ and its standard non-transitive extensions), double-Heyting algebras, residuated lattices, and lattices. 
\end{abstract}

\begin{keyword}
Uniform Interpolation \sep Coherence \sep Compact Congruences \sep Free Algebras \sep Canonical Extensions \sep Modal Logics \sep Residuated Lattices
\end{keyword}

\end{frontmatter}


\section{Introduction}

Uniform interpolation was established for intuitionistic propositional logic $\lgc{IPC}$ by Pitts in~\cite{Pit92} and  used by Ghilardi and Zawadowski in~\cite{GZ97} to prove that the first-order theory of Heyting algebras has a model completion. More generally, the  latter authors proved in~\cite{GZ02} that a model completion exists for the first-order theory of any variety $\V$ satisfying certain category-theoretic conditions. These conditions were reformulated by van Gool et al.~in~\cite{vGMT17} as properties of equational consequence in $\V$ --- most prominently, right and left uniform deductive interpolation --- and related to properties of compact congruences on free and finitely presented algebras of $\V$. In particular, if $\V$ admits deductive interpolation (or has the amalgamation property), then right uniform deductive interpolation for $\V$ amounts to the condition that the restriction of any compact congruence on a finitely generated free algebra of $\V$ to a free algebra over a subset of the generators is again compact. 

In Section~\ref{s:interpolation} of this paper, we prove that this last condition is equivalent to the model-theoretic notion of coherence considered by Wheeler in~\cites{Whe76,Whe78} and  also studied quite widely in algebra, mostly in connection with rings, groups, monoids, and lattices (see, e.g.,~\cites{ES70,CLL73,Sch83,Gou92}). A variety $\V$ is said to be coherent if every finitely generated subalgebra of a finitely presented member of  $\V$ is again finitely presented.\footnote{Note that the notion of coherence defined and  studied by Taylor in~\cite{Tay74} is entirely different, and not related to our results.} Coherence of $\V$ is implied by, and indeed in conjunction with amalgamation and another property implies, the existence of a model completion for the first-order theory of $\V$ (see~\cite{Whe76}).

Following Pitts' theorem for $\lgc{IPC}$, many proofs of uniform interpolation or its failure for various logics have appeared in the literature. In particular, all intermediate logics with Craig interpolation admit uniform interpolation; however, some modal logics, including $\lgc{S4}$ and $\lgc{K4}$, admit Craig interpolation but not uniform interpolation~\cites{GZ02,Bil07}. For the modal logic $\lgc{K}$, some extra care is necessary. A semantic proof of uniform interpolation was given for $\lgc{K}$ (also G{\"o}del-L{\"o}b logic $\lgc{GL}$ and Grzegorgczyk logic $\lgc{S4Grz}$) by Visser in~\cite{Vis96}, and a Pitts-style proof was provided (also for $\lgc{KT}$) by B{\'i}lkov{\'a} in~\cite{Bil07}. However, these proofs establish a uniform ``implication-based'' interpolation property, and not the uniform deductive ``consequence-based'' interpolation property considered in~\cite{vGMT17}.  The same observation applies to uniform interpolation results for substructural logics (varieties of residuated lattices) established by Alizadeh et~al.~in~\cite{ADO14}. Note in particular that the varieties of lattice-ordered abelian groups and MV-algebras admit uniform deductive interpolation, but not Craig interpolation (see~\cite{vGMT17}).

In Section~\ref{s:criterion} of this paper, we provide a general criterion for establishing the failure of coherence, and hence also of uniform deductive interpolation. This criterion states, roughly, that in a coherent variety $\V$ of algebras with a term-definable semilattice reduct, any decreasing and monotone term that satisfies a fixpoint embedding condition in $\V$ admits a fixpoint obtained by iterating the term finitely many times. In Section~\ref{s:canonical}, we review briefly the theory of canonical extensions and prove two useful fixpoint lemmas. In Section~\ref{s:casestudies}, we use these lemmas and the general criterion of Section~\ref{s:criterion} to obtain a condition for the failure of coherence for varieties of ordered algebras closed under canonical completions. We then use this condition to show that any coherent variety of Boolean algebras with operators that is closed under canonical extensions has equationally definable principal congruences (EDPC). In particular, $\lgc{K}$ is not coherent, does not admit uniform deductive interpolation, and its first-order theory does not have a model completion. Indeed, the same is true of any normal modal logic closed under canonical extensions for which $\boxdot^n x \eq \boxdot^{n+1} x$ fails for all $n \in \N$ (where $\boxdot x := \Box x\wedge x$). We obtain similar results also for varieties of residuated lattices, double-Heyting algebras, and lattices. In the latter case, we obtain an alternative proof of Schmidt's result that the variety of lattices is not coherent and its first-order theory does not have a model completion~\cite{Sch83}.


\section{Uniform Deductive Interpolation and Coherence} \label{s:interpolation}

In this section we first recall the definitions of deductive interpolation and uniform deductive interpolation for equational consequence in a variety, and their algebraic characterizations in terms of congruences on free algebras. We then recall the notion of coherence for a variety and relate this notion to uniform deductive interpolation (Theorem~\ref{th:coherent}).

Let us assume that $\lang$ is an algebraic signature with at least one constant symbol and that $\V$ is a variety of $\lang$-algebras. The assumption that $\lang$ contains a constant is not essential --- indeed it will be dropped when considering varieties of lattices in Subsection 5.4 --- but is adopted here for convenience of presentation and easier reference to~\cites{MMT14,vGMT17}. 

For any (possibly infinite) set of variables $\x$, we denote by $\Tm(\x)$ the
\emph{$\lang$-term} {\em algebra over $\x$} and by $\F(\x)$, the {\em free
  algebra of $\V$ over $\x$}. We write $t(\x)$,  $\eps(\x)$, or $\Si(\x)$ to
denote that the variables of, respectively, an $\lang$-term $t$,
$\lang$-equation $\eps$, or set of $\lang$-equations $\Si$ are included in
$\x$. Where appropriate, we deliberately confuse these expressions with the
corresponding elements, pairs of elements, and sets of pairs of elements from
$\F(\x)$. We also adopt the convention that $\x$, $\y$, etc. denote disjoint
sets, and let $\x,\y$ denote their disjoint union.

For a set of $\lang$-equations $\Si \cup \{\eps\}$ containing exactly the variables in the set $\x$, define
\[
\begin{array}{rcc}
\Si \mdl{\V} \eps & :\Longleftrightarrow & \text{for every $\m{A} \in \V$ and homomorphism $e
\colon \Tm(\x) \to \m{A}$,}\\[.1in] 
& &  \Si \subseteq \ker(e) \ \Longrightarrow \ \eps \in \ker(e).
\end{array}
\]
For a set of $\lang$-equations $\Si \cup \De$, we write $\Si \mdl{\V} \De$ if $\Si \mdl{\V} \eps$ for all $\eps \in \De$. 

We say that $\V$ admits {\it deductive interpolation} if for any sets $\x,\y,\z$ and set of equations $\Si(\x,\y) \cup \{\eps(\y,\z)\}$ satisfying $\Si \mdl{\V} \eps$, there exists a set of equations $\Pi(\y)$ such that $\Si \mdl{\V} \Pi$ and $\Pi \mdl{\V} \eps$. This property has been studied in depth by many authors (see, e.g.,~\cites{Jon65,Pig72,Mak77,Wro84a,Cze85,Ono86,Cze07,MMT14}). In particular, it is known that if $\V$ has the amalgamation property, then it admits deductive interpolation and, conversely, if $\V$ admits deductive interpolation and has the congruence extension property, then it has the amalgamation property (see~\cite{MMT14} for proofs and further references). Let us just also note here (see~\cite{vGMT17} for a proof) that $\V$ admits deductive interpolation if and only if for any finite sets $\x,\y$ and finite set of equations $\Si(\x,\y)$, there exists a set of equations $\Pi(\y)$ such that for any equation $\eps(\y,\z)$, 
\[ 
\Si \mdl{\V} \eps \iff \Pi \mdl{\V} \eps.
\]
Following~\cite{vGMT17}, we say that $\V$ admits {\em right uniform deductive interpolation} if $\Pi(\y)$ in the preceding condition is required to be finite. 

To reformulate these notions via congruences on free algebras of $\V$, let us denote the congruence on an algebra $\m{A}$ generated by some $S \subseteq A^2$  by $\cg{\m{A}}(S)$, and recall (see~\cite{MMT14}*{Lemma~2}) that for any sets of equations $\Si(\x), \De(\x)$, 
\[
\Si \models_\V \De \iff \cg{\F(\x)} \De \subseteq \cg{\F(\x)} \Si.
\]
Let us also denote the congruence lattice of an algebra $\m{A}$ by $\Con \m{A}$, and recall that the \emph{adjoint lifting} of a homomorphism $h \colon \m A \to \m B$ in $\V$ to the congruence lattices of $\m{A}$ and $\m{B}$ is the adjoint pair of maps 
\begin{center}
$h^* \colon \Con \m{A} \leftrightarrows \Con \m{B} \cocolon h^{-1}$,
\begin{align*}
h^*(\psi) &:= \cg{\m B}\{ \lan h(a),h(a') \ran  \mid  \lan a,a' \ran
            \in \psi\}, \\ 
h^{-1}(\The) &:= \{\lan a,a' \ran \in A^2  \mid  \lan h(a),h(a')
                 \ran \in \The\} = \ker h(-)/{\The}. 
\end{align*}
\end{center}
It is easily checked that  $\V$  admits deductive interpolation if and only if for any finite sets $\x,\y,\z$, the following diagram commutes:
\begin{equation*}
\begin{tikzpicture}[baseline=(current  bounding  box.center)]
 \matrix (m) [matrix of math nodes, row sep=1.5em, column sep=1.5em]{
\Con \F(\x,\y) 		& 		& \Con  \F(\y) 		\\
 		&		& 		\\
\Con \F(\x,\y,\z) 	&     		& \Con \F(\y,\z) 	\\
};
\path
(m-1-1) edge[->] node[above] {$i^{-1}$} (m-1-3)
(m-1-1) edge[->] node[left] {$j^*$} (m-3-1)
(m-3-1) edge[->] node[below] {$k^{-1}$} (m-3-3)
(m-1-3) edge[->] node[right] {$l^*$} (m-3-3);
\end{tikzpicture}
\end{equation*}
where $i$, $j$, $k$, and $l$ are the inclusion maps between corresponding finitely generated free algebras. 

Let us denote the set of compact (finitely generated) congruences on an algebra $\m A$ by $\KCon \m{A}$, noting that $\KCon \m A$ is always a join-subsemilattice of $\Con \m A$, but meets in $\KCon \m A$ need not exist in general. For a homomorphism $h \colon \m A \to \m B$, the map $h^*$  restricts to a map $h^*|_{\KCon \m{A}} \colon \KCon \m A \to \KCon \m B$, which we call the \emph{compact lifting} of $h$. On the other hand, $h^{-1}$ restricts to $h^{-1}|_{\KCon \m{B}} \colon \KCon \m B \to \KCon \m A$, the right adjoint of $h^*|_{\KCon \m{A}}$, if and only if $h$ preserves compact congruences. The next result shows that if such adjoints exist for all inclusion maps between finitely generated free algebras in $\V$, then they exist for all homomorphisms between finitely presented algebras in $\V$.

\begin{Proposition}[cf.~\cite{vGMT17}*{Proposition~3.8}]\label{p:characterization}
The following are equivalent:
\begin{newlist}
\item[\rm (1)] 	For any finite sets $\x,\y$ and finite set of equations
  $\Si(\x,\y)$, there exists a finite set of equations $\Pi(\y)$ such that for any
  equation $\eps(\y)$, 
\[ 
\Si \mdl{\V} \eps \iff \Pi \mdl{\V} \eps.
\]
\item[\rm (2)] 	For any finite sets $\x,\y$ and compact congruence $\The$ on $\F(\x,\y)$, the congruence $\The \cap F(\y)^2$  on $\F(\y)$ is compact.
\item[\rm (3)]	For any finite sets $\x,\y$, the compact lifting of the inclusion map from $\F(\y)$ to $\F(\x,\y)$ has a right adjoint.
\item[\rm (4)] 	The compact lifting of any homomorphism between finitely
  presented algebras in $\V$ has a right adjoint. 
\end{newlist}
\end{Proposition}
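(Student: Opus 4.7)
The plan is to establish the cycle (1) $\Leftrightarrow$ (2) $\Leftrightarrow$ (3) together with the equivalence (3) $\Leftrightarrow$ (4). Three of the four links are essentially formal and use the congruence-equation correspondence recalled just before the statement; the substantive step is (3) $\Rightarrow$ (4), where an arbitrary homomorphism between finitely presented algebras has to be reduced to a free inclusion plus something else cheap.

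For (1) $\Leftrightarrow$ (2), I would use $\Si \mdl{\V} \eps$ iff $\eps \in \cg{\F(\x,\y)}(\Si)$, together with the fact that compact congruences on a finitely generated free algebra are exactly those of the form $\cg{\F(\x,\y)}(\Si)$ with $\Si$ finite. Identifying $\The \cap F(\y)^2$ with $i^{-1}(\The)$ for the inclusion $i \colon \F(\y) \hookrightarrow \F(\x,\y)$, the existence in (1) of a finite $\Pi(\y)$ such that $\Si\mdl{\V}\eps \Leftrightarrow \Pi\mdl{\V}\eps$ for $\eps(\y)$ becomes exactly the assertion that $i^{-1}(\The)$ is finitely generated as a congruence on $\F(\y)$, i.e., (2). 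For (2) $\Leftrightarrow$ (3), I would invoke the general fact that the adjoint pair $h^*\dashv h^{-1}$ on full congruence lattices restricts to an adjunction of compact congruence semilattices precisely when $h^{-1}$ preserves compactness; applied to $h=i$ this translates directly into (2).

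The work lies in (3) $\Rightarrow$ (4). Given $h \colon \m{A}\to\m{B}$ with $\m{A} \cong \F(\x)/\The_A$ and $\m{B} \cong \F(\y)/\The_B$ ($\The_A$, $\The_B$ compact), lift $h$ to $\tilde{h} \colon \F(\x)\to\F(\y)$ by choosing representatives for the images of the generators. Via the correspondence between compact congruences on $\m{A}$ and compact congruences on $\F(\x)$ containing $\The_A$ (and similarly for $\m{B}$), it suffices to show that $\tilde{h}^{-1}$ preserves compact congruences. Now factor $\tilde{h} = s\circ i$, where $i \colon \F(\x)\hookrightarrow\F(\x,\y)$ is the inclusion and $s \colon \F(\x,\y)\twoheadrightarrow\F(\y)$ is defined by $s(x_k) = \tilde{h}(x_k)$ and $s(y_k) = y_k$. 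Hypothesis (3) gives that $i^{-1}$ preserves compactness. The surjection $s$ has kernel generated by the finite set $\{\langle x_k,\tilde{h}(x_k)\rangle : x_k \in \x\}$, hence compact; and the correspondence theorem for surjections gives $s^{-1}(\Psi) = \ker s \vee \cg{\F(\x,\y)}(\{\langle a_i,a_i'\rangle : i\leq n\})$ for any lifts $(a_i,a_i')$ of finitely many generators $(b_i,b_i')$ of $\Psi$, so $s^{-1}$ also preserves compactness. Composing, $\tilde{h}^{-1} = i^{-1}\circ s^{-1}$ preserves compactness. The converse (4) $\Rightarrow$ (3) is immediate, since finitely generated free algebras are finitely presented (with empty set of relations).

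The main obstacle is the decomposition step in (3) $\Rightarrow$ (4): one has to split an arbitrary homomorphism between finitely presented algebras into pieces that are either controlled by hypothesis (3), namely the free inclusion $i$, or trivially controlled by compactness of a kernel, namely the surjection $s$. Once the right factorization is in hand, preservation of compactness follows routinely from the correspondence theorem and the fact that finite joins of compact congruences are compact.
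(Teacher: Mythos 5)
Your argument is correct, but note that the paper does not actually prove Proposition~\ref{p:characterization}: it is stated with a pointer to Proposition~3.8 of \cite{vGMT17}, so there is no in-paper proof to compare against. Your route --- translating (1) and (2) into each other via $\Si \mdl{\V} \eps \iff \eps \in \cg{\F(\x,\y)}\Si$, reading (3) as the statement that $i^{-1}$ preserves compactness using the adjunction recalled just before the proposition, and reducing (4) to (3) by factoring a lift $\tilde{h} = s \circ i$ of a homomorphism of finitely presented algebras through $\F(\x,\y)$ with $s$ a surjection with compact kernel --- is essentially the standard argument of \cite{vGMT17}, and all the steps (including the use of compactness of $\ker q$ in transferring compact congruences between $\m{B}$ and $\F(\y)$) check out.
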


Following Wheeler~\cite{Whe76}, let us call $\V$ \emph{coherent} if every finitely generated subalgebra of a finitely presented member of $\V$ is itself finitely presented. It is proved in~\cite{Whe76} that the coherence of $\V$ is implied by (and in conjunction with amalgamation and another property, implies) the existence of a model completion for the first-order theory of $\V$. Note that, by our earlier assumption, coherence is defined here only for varieties in a signature $\lang$ that contains at least one constant symbol. This restriction is not essential, and in fact we will remove it when considering lattices in Subsection~\ref{ss:lattices}, but allows for a neater presentation.

Below we establish a useful technical result, proved in a slightly different form as Lemma~3.9 in~\cite{vGMT17}.

\begin{Lemma}\label{l:con-gen}
Suppose that  $f\colon \m{F}(\y)\to \m{A}$ and  $g\colon \m{F}(\x)\to \m{A}$ are surjective homomorphisms 
in $\V$ and let $r \colon \m{F}(\y)\to \m{F}(\x)$ and $s\colon \m{F}(\x)\to \m{F}(\y)$  be the natural maps satisfying 
$f = g\circ r$ and $g = f\circ s$. If $\ker(g)$ is generated by  $\Pi \subseteq F(\x)^2$, then $\ker(f)$ is generated by $\Si = \{\lan s(a), s(b)\ran \mid \lan a,b\ran\in \Pi\}\cup\{\lan y,sr(y)\ran \mid y\in\y\} \subseteq F(\y^2)$.
\end{Lemma}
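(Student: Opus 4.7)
The plan is to establish the two inclusions $\cg{\m{F}(\y)}(\Si) \subseteq \ker(f)$ and $\ker(f) \subseteq \cg{\m{F}(\y)}(\Si)$ separately; throughout I write $\theta := \cg{\m{F}(\y)}(\Si)$.

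For the first inclusion, it suffices to verify that every generator of $\Si$ lies in $\ker(f)$. If $\lan a,b\ran \in \Pi \subseteq \ker(g)$, then $g = f \circ s$ gives $f(s(a)) = g(a) = g(b) = f(s(b))$; and for each $y \in \y$, combining $f = g \circ r$ with $g = f \circ s$ yields $f(sr(y)) = g(r(y)) = f(y)$. So $\Si \subseteq \ker(f)$ and hence $\theta \subseteq \ker(f)$.

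For the reverse inclusion, the crucial step is the auxiliary identity $u \equiv sr(u) \pmod{\theta}$ for every $u \in F(\y)$. This follows from the fact that $\lan y, sr(y)\ran \in \Si \subseteq \theta$ for each generator $y \in \y$: the identity map and $sr$ induce homomorphisms $\m{F}(\y) \to \m{F}(\y)/\theta$ that agree on all free generators of $\m{F}(\y)$ and therefore coincide. Given $\lan u,v\ran \in \ker(f)$, the equation $f = g \circ r$ yields $g(r(u)) = f(u) = f(v) = g(r(v))$, so $\lan r(u), r(v)\ran \in \ker(g) = \cg{\m{F}(\x)}(\Pi)$. Applying the homomorphism $s$ transports this membership to $\lan sr(u), sr(v)\ran \in \cg{\m{F}(\y)}(\{\lan s(a), s(b)\ran : \lan a,b\ran \in \Pi\}) \subseteq \theta$ (standard fact: a homomorphism sends a generating set of a congruence to a generating set of the image congruence). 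Chaining this with two instances of the auxiliary identity gives $u \equiv sr(u) \equiv sr(v) \equiv v \pmod{\theta}$, as required.

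The only step requiring any real thought is the auxiliary identity $u \equiv sr(u) \pmod{\theta}$, but once one reads it as the uniqueness of homomorphisms out of a free algebra agreeing on generators its justification is immediate; everything else is a routine diagram chase through the defining equations $f = g \circ r$ and $g = f \circ s$.
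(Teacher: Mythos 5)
Your proof is correct, and it handles the nontrivial inclusion $\ker(f) \subseteq \cg{\F(\y)}{\Si}$ by a more elementary route than the paper, although both arguments ultimately rest on the same two observations: the generators $\lan y, sr(y)\ran$ force $sr$ to agree with the identity modulo $\theta = \cg{\F(\y)}{\Si}$ (by freeness of $\F(\y)$), and the generators $\lan s(a),s(b)\ran$ ensure that $s$ carries $\ker(g)=\cg{\F(\x)}{\Pi}$ into $\theta$. The paper packages these structurally: it forms the quotient $\m{B}=\F(\y)/\theta$ with projection $p$, factors $p\circ s$ through $g$ to obtain $k\colon \m{A}\to\m{B}$ with $k\circ g=p\circ s$, checks $p=k\circ f$ on the generators $\y$, and reads off $\ker(f)\subseteq\ker(p)=\theta$ from this global factorization. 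You instead chase a single pair $\lan u,v\ran\in\ker(f)$: push it through $r$ into $\ker(g)$, pull it back through $s$ via the inclusion $(s\times s)\bigl(\cg{\F(\x)}{\Pi}\bigr)\subseteq \cg{\F(\y)}{(s(\Pi))}$, and close the loop with $u\equiv sr(u)\equiv sr(v)\equiv v \pmod{\theta}$. That inclusion is exactly the monotonicity fact underlying the compact lifting $s^*$ from Section~2 and is standard (one checks that $s^{-1}\bigl(\cg{\F(\y)}{(s(\Pi))}\bigr)$ is a congruence containing $\Pi$); your phrase ``generating set of the image congruence'' is slightly loose, since the set-theoretic image of a congruence under a homomorphism need not itself be a congruence, but the containment you actually use is correct. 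The trade-off is that you avoid introducing the auxiliary algebra $\m{B}$ and the induced map $k$ entirely, at the price of invoking that one general fact about images of generated congruences; both proofs are complete.
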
  

\begin{proof}
The situation is depicted in the following diagram:
\begin{figure}[h]
\begin{center}  
\begin{tikzpicture}[->, >=stealth,auto]
\node (A) at (0,0) {$\m{A}$}; 
\node (Fx) at (2,1) {$\F(\overline{x})$};
\node (Fy) at (2,-1) {$\F(\overline{y})$};
\node (B) at (0,-1.5) {$\m{B}$};
\path (Fx) edge[->>] node[above] {$g$} (A) (Fy) edge[->>] node[above] {$f$} (A)
(Fy) edge[bend left] node {$r$} (Fx) (Fx) edge[bend left] node {$s$} (Fy);
\path (Fy) edge[->>] node {$p$} (B) (A) edge[dashed] node {$k$} (B);
\end{tikzpicture}
\end{center}
\end{figure}

\noindent
Observe first that $\Si\subseteq \ker(f)$. For any $\lan a,b\ran\in \Pi$, we have $g(a) = g(b)$ and so $fs(a) = g(a) = g(b) = fs(b)$; that is, $\lan s(a),s(b) \ran \in \ker(f)$. Also, given any $y\in\y$, we have $f(y) =gr(y) = fsr(y)$; that is, $\lan y,sr(y) \ran \in \ker(f)$. 

Now let $\Psi=\cg{\m{F}(\y)}{\Si}$. Define $\m{B} = \m{F}(\y)/\Psi$ and let $p\colon \m{F}(\y)\to\m{B}$ be the natural homomorphism  with $\Psi = \ker(p) \subseteq \ker(f)$. Let $h = p\circ s$, and observe that for $\lan a,b\ran \in \Pi$, we have $h(a) =  ps(a) = ps(b) = h(b)$, since $\lan s(a),s(b)\ran\in \Psi$. This implies $\ker(g) \subseteq\ker(h)$, and hence there exists a unique homomorphism $k\colon \m{A}\to\m{B}$ such that $k\circ g = h$.  For each $y\in\y$,  
\[
p(y) = psr(y) = hr(y) = kgr(y) = kf(y),
\]
which, by freeness, implies that $p = kf$.   But then $\Psi = \ker(p) \supseteq \ker(f)$, establishing the desired equality $\Psi=\ker(f)$. 
\end{proof}  

Note that if $\Pi$ and $\y$ in Lemma~\ref{l:con-gen} are finite, then $\Si$ is also finite. Other properties of $\Pi$, such as being recursive, will also transfer to $\Si$ under certain further mild assumptions, but this will not concern us here. 

We now prove the main result of this section.

\begin{Theorem}\label{th:coherent}
$\V$ is coherent if and only if any of the equivalent conditions of Proposition~\ref{p:characterization} holds. 
\end{Theorem}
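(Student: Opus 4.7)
The plan is to establish the equivalence directly with condition~(2) of Proposition~\ref{p:characterization}, exploiting the standard correspondence between finitely presented algebras and quotients of finitely generated free algebras by compact congruences.

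For the forward direction, I would assume $\V$ is coherent and fix a compact congruence $\Theta$ on $\F(\x,\y)$ with $\x,\y$ finite. Then $\m{A} := \F(\x,\y)/\Theta$ is finitely presented, and its subalgebra $\m{B}$ generated by the images of $\y$ is finitely generated. Coherence forces $\m{B}$ to be finitely presented, i.e.\ to admit \emph{some} presentation by a compact congruence over a finite generating set. Separately, the composition $\F(\y) \hookrightarrow \F(\x,\y) \twoheadrightarrow \m{A}$ maps onto $\m{B}$ with kernel precisely $\Theta \cap F(\y)^2$. To conclude that this kernel is compact, I would invoke Lemma~\ref{l:con-gen}, which, given two surjections from finitely generated free algebras onto the same algebra, transfers a finite generating set of relations from one to the other.

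For the backward direction, I would assume~(2) and let $\m{B}$ be a finitely generated subalgebra of a finitely presented $\m{A} \in \V$. Fixing a surjection $\F(\x) \twoheadrightarrow \m{A}$ with $\x$ finite and compact kernel $\Phi$, I choose finitely many generators $b_i = t_i/\Phi$ of $\m{B}$ with $t_i \in F(\x)$. Introducing fresh variables $\y = \{y_1,\dots,y_n\}$, I extend the first surjection to a homomorphism $\F(\x,\y) \twoheadrightarrow \m{A}$ sending $y_i \mapsto b_i$; its kernel $\Theta$ is generated by $\Phi$ together with $\{\langle y_i, t_i\rangle : 1 \le i \le n\}$, hence is compact. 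The composition $\F(\y) \hookrightarrow \F(\x,\y) \twoheadrightarrow \m{A}$ then has image exactly $\m{B}$ and kernel $\Theta \cap F(\y)^2$, which is compact by~(2). So $\m{B} \cong \F(\y)/(\Theta \cap F(\y)^2)$ is finitely presented.

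The main obstacle lies in the forward direction: coherence only delivers \emph{some} finite presentation of $\m{B}$ chosen abstractly, whereas what is needed is compactness of the kernel of the \emph{specific} surjection $\F(\y) \twoheadrightarrow \m{B}$ induced by the inclusion of $\y$ into $\x,\y$ followed by the quotient. Reconciling these two presentations is exactly what Lemma~\ref{l:con-gen} is designed for, and the whole equivalence hinges on applying it correctly; the backward direction, by contrast, reduces to a direct construction of the generating set of $\Theta$.
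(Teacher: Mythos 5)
Your proposal is correct and follows essentially the same route as the paper: the forward direction uses coherence to obtain an abstract finite presentation of the subalgebra and then Lemma~\ref{l:con-gen} to transfer a finite set of defining relations to the specific surjection $\F(\y)\twoheadrightarrow \F(\y)/(\Theta\cap F(\y)^2)$, while the backward direction enlarges a finite presentation by adjoining the subalgebra's generators together with witnessing relations. The only cosmetic difference is that you argue the backward direction directly rather than contrapositively.
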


\begin{proof}
Assume that $\V$ is coherent. We will prove that condition (2) of Proposition~\ref{p:characterization} holds.  Let $\The$ be a compact congruence on $\m{F}(\y,\z)$, so that $\m{C} = \m{F}(\y,\z)/\The$ is finitely presented. Let $\Psi = \The\cap F(\y)^2$ and  $\m{A} = \m{F}(\y)/\Psi$. Then $\m{A}$ is finitely generated and embeds into $\m{C}$. By coherence, $\m{A}$ is finitely presented.  Hence let $\m{F}(\x)/\Phi \cong \m{A}$ be a finite presentation such that $\Pi$ is a finite set of generators of $\Phi$. Let  $f\colon \m{F}(\y)\to \m{A}$ and  $g\colon \m{F}(\x)\to \m{A}$ be the surjective homomorphisms such that $\Psi = \ker(f)$ and $\Phi=\ker(g)$, and let $r \colon \m{F}(\y)\to \m{F}(\x)$ and $s\colon \m{F}(\x)\to \m{F}(\y)$ be the natural maps satisfying $f = g\circ r$ and $g = f\circ s$. Then the assumptions of Lemma~\ref{l:con-gen} are satisfied, so $\Psi$ is generated by a finite set. That is, $\Psi$ is compact, as required.

For the converse,  assume that $\V$ is not coherent. Then there exists a finitely presented algebra $\m{B}$ in $\V$ and a subalgebra $\m{A}$ of $\m{B}$ that is finitely generated but not finitely presented.  Let $(G_B,R_B)$ be a finite presentation of $\m{B}$, and let $G_A$ be a finite set of generators of $\m{A}$. We construct another finite presentation of $\m{B}$ as follows. We let $G = G_B\cup G_A$ be the set of generators. Since each $g\in G_A$ is generated from $G_B$, we have $g = t_g(G_B)$ for some term $t_g$. Let $R'$ be the set of witnessing relations $\lan g,t_g(G_B) \ran$ for each $g\in G_A$. We let $R = R_B\cup R'$ and obtain a presentation $(G,R)$ of $\m{B}$ that is still finite, since $G_A$ is finite. Now consider the free algebra $\m{F}(\x,\y)$ such that there exist bijections between $\x$ and $G_B$, and $\y$ and $G_A$. The kernel of the induced homomorphism from $\m{F}(\x,\y)$ onto $\m{B}$ is a compact congruence $\The$ on $\m{F}(\x,\y)$. However, $\Psi = \The\cap F(\y)^2$ is not compact, as otherwise $\m{F}(\y)/\Psi$ would give a finite presentation of $\m{A}$, contradicting the assumption.  Hence condition (2) of Proposition~\ref{p:characterization} fails.
\end{proof}

Recall that Higman's embedding theorem for groups (cf.~\cite{Hig61}) states that every  finitely generated recursively presented group embeds into some finitely presented group. Following~\cite{KS95}, we say that $\V$ has the \emph{Higman property} if every finitely generated recursively presented algebra in $\V$ embeds into a finitely presented algebra in $\V$. 

\begin{Proposition}\label{p:Higman}
If every finitely generated recursively presented algebra in $\V$ is finitely presented, then $\V$ is coherent. Moreover, if $\V$ satisfies the Higman property, then the converse also holds. 
\end{Proposition}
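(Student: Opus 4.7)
The plan is to prove both directions directly, with the key observation being that finitely presented algebras in any variety have a recursively enumerable equational theory, so that their finitely generated subalgebras are recursively presented.

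For the first implication, suppose that every finitely generated recursively presented algebra in $\V$ is finitely presented, and let $\m{A}$ be a finitely generated subalgebra of a finitely presented $\m{B} \in \V$, say generated by a finite set $\{a_1,\ldots,a_n\} \subseteq B$. I would show that $\m{A}$ is itself recursively presented, so that the hypothesis applies directly. Since $\m{B}$ is finitely presented and $\V$ has a fixed (say, recursively enumerable) equational axiomatization, the set of equations provable from the axioms of $\V$ together with the finitely many defining relations of $\m{B}$ is recursively enumerable; in particular, the word problem of $\m{B}$ is r.e. Restricting attention to those equations expressible in $n$ variable symbols naming $a_1,\ldots,a_n$, one obtains a recursively enumerable set $R \subseteq F(\x)^2$ (with $|\x|=n$) such that $\m{A} \cong \F(\x)/\cg{\F(\x)}(R)$. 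Thus $\m{A}$ is finitely generated and recursively presented, and hence by hypothesis finitely presented.

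For the converse, assume that $\V$ is coherent and has the Higman property, and let $\m{A}$ be a finitely generated recursively presented algebra in $\V$. By the Higman property, $\m{A}$ embeds into some finitely presented $\m{B} \in \V$. Then $\m{A}$ is a finitely generated subalgebra of the finitely presented $\m{B}$, so coherence of $\V$ yields that $\m{A}$ is finitely presented, as required.

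The main delicate point is the recursive-presentation claim in the first direction: one must be careful to note that a finitely presented algebra in a variety need not have a \emph{decidable} word problem, but only a recursively enumerable one, and that this is exactly what the standard definition of ``recursively presented'' requires of the set of defining relations. Beyond that subtlety, both implications are nearly formal: the forward direction is a direct application of the hypothesis to the recursive presentation extracted from $\m{B}$, and the converse reduces immediately to coherence via the embedding supplied by the Higman property.
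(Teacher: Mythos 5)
Your proposal is correct and follows essentially the same route as the paper: both directions hinge on first showing that a finitely generated subalgebra of a finitely presented member of $\V$ is recursively presented (the paper phrases this as: with the generators of $\m{A}$ included among those of $\m{B}$, the congruence $\Psi = \The \cap F(\x)^2$ is recursively generated because $\The$ is compact), and the converse is the same immediate combination of the Higman property with coherence. Your explicit remark that this step needs a recursively enumerable axiomatization of $\V$ and only a recursively enumerable (not decidable) word problem is a point the paper leaves implicit, but it is the same argument.
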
  

\begin{proof}
First we prove that a certain converse to the Higman property holds: namely, if $\m{A}$ is a finitely generated subalgebra of some finitely presented  $\m{B}\in\V$, then $\m{A}$ is recursively presented.  As in the proof of Theorem~\ref{th:coherent},  we may assume without loss of generality that the set of generators of $\m{A}$ is contained in the set of generators of $\m{B}$. Suppose then that $\m{B}\cong\m{F}(\x,\y)/\The$ for some compact congruence $\The$ and $\m{A}\cong \m{F}(\x)/\Psi$, where $\Psi = \The\cap\m{F}(\x)^2$. Since $\The$ is compact and $\x$ is finite, $\Psi$ is recursively generated. Hence $\m{A}$ is recursively presented as claimed.

Now assume that every finitely generated recursively presented algebra in $\V$ is finitely presented, and consider a finitely generated subalgebra $\m{A}$ of some finitely presented $\m{B}\in\V$. As we have just shown, $\m{A}$ must be recursively presented, and hence $\m{A}$ is finitely presented. The remaining part is clear. 
\end{proof}

\begin{Example}\label{e:typical}
Clearly, every locally finite variety is coherent. Less obviously, the property holds for the varieties of Heyting algebras (the main content of Pitts' theorem for $\lgc{IPC}$~\cite{Pit92}), abelian groups, lattice-ordered abelian groups, and MV-algebras (see~\cite{vGMT17}). On the other hand, by Higman's embedding theorem, the variety of groups is not coherent, since there exists a finitely generated recursively presented group that is not finitely presented. Similar reasoning for monoids and other varieties possessing the Higman property (see~\cite{KS95}) produces further failures of coherence.
\end{Example}

Combining Theorem~\ref{th:coherent} with \cite{vGMT17}*{Proposition~3.5}, we obtain also the following characterization of right uniform deductive interpolation.

\begin{Proposition}
$\V$ admits right uniform deductive interpolation if and only if $\V$ is coherent and admits deductive interpolation.
\end{Proposition}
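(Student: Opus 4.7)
The plan is to route everything through Theorem~\ref{th:coherent}, which identifies coherence with condition~(1) of Proposition~\ref{p:characterization}. Comparing that condition with the definition of right uniform deductive interpolation, the only discrepancy is the scope of the ``outside'' variables: condition~(1) quantifies over equations $\eps(\y)$, whereas right uniform deductive interpolation quantifies over equations $\eps(\y,\z)$ with arbitrary fresh $\z$. So the proposition amounts to the claim that, in the presence of deductive interpolation, this extra freedom for $\z$ comes for free.

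The $(\Rightarrow)$ direction is immediate. If $\V$ admits right uniform deductive interpolation, then the witnessing finite $\Pi(\y)$ is in particular a set of equations establishing deductive interpolation; and instantiating the defining condition at $\z = \emptyset$ reproduces condition~(1) of Proposition~\ref{p:characterization}, so by Theorem~\ref{th:coherent}, $\V$ is coherent.

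For the $(\Leftarrow)$ direction, I would assume $\V$ is coherent and admits deductive interpolation, fix finite $\x,\y$ and a finite set $\Si(\x,\y)$, and use Theorem~\ref{th:coherent} together with condition~(1) to produce a finite $\Pi(\y)$ satisfying $\Si \mdl{\V} \pi \iff \Pi \mdl{\V} \pi$ for every equation $\pi(\y)$. The claim is that this same $\Pi$ witnesses right uniform deductive interpolation. Fix an equation $\eps(\y,\z)$. If $\Si \mdl{\V} \eps$, deductive interpolation yields a (possibly infinite) set $\Pi'(\y)$ with $\Si \mdl{\V} \Pi'$ and $\Pi' \mdl{\V} \eps$; applying condition~(1) to each $\pi' \in \Pi'$ separately gives $\Pi \mdl{\V} \pi'$, hence $\Pi \mdl{\V} \Pi' \mdl{\V} \eps$. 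Conversely, if $\Pi \mdl{\V} \eps$, then since each $\pi \in \Pi$ trivially satisfies $\Pi \mdl{\V} \pi$, condition~(1) yields $\Si \mdl{\V} \pi$, so $\Si \mdl{\V} \Pi \mdl{\V} \eps$.

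The main subtlety is precisely the reconciliation of scopes above: the $\Pi$ supplied by coherence is tied to equations in $\y$ only, so without deductive interpolation we could not guarantee that this fixed finite $\Pi$ handles arbitrary $\eps(\y,\z)$. Deductive interpolation is what lets us intercept such an $\eps$ with a $\y$-only intermediate set $\Pi'$, to which the uniform $\Pi$ then applies equation-by-equation; crucially, $\Pi$ is chosen from $\Si$ alone, before seeing $\eps$, so this argument respects the uniformity requirement.
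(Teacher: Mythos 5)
Your proof is correct and follows essentially the same route as the paper, which obtains the result by combining Theorem~\ref{th:coherent} with a cited proposition of van Gool et al.\ rather than writing out the argument. The one step the paper outsources to that citation --- using deductive interpolation to upgrade the coherence condition from equations $\eps(\y)$ to equations $\eps(\y,\z)$ via an intermediate $\y$-only set $\Pi'$ --- is exactly what you supply, and you handle the key uniformity point (choosing $\Pi$ from $\Si$ alone, before seeing $\eps$) correctly.
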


A similar characterization has been obtained for a left uniform deductive interpolation property (see~\cite{vGMT17}*{Proposition~4.3}). However, in this paper, we focus only on failures of right uniform deductive interpolation, indeed only on cases where coherence fails. 


\section{A General Criterion} \label{s:criterion}

The main result of this section, Theorem~\ref{t:general}, establishes that in a coherent variety, unary terms satisfying certain conditions also satisfy an $n$-potency identity for some $n \in \N$. Understood contrapositively, this result provides a general criterion for demonstrating the failure of coherence in a variety. 

Let $\lang$ be a signature containing at least one constant symbol and let $\V$ be a variety of $\lang$-algebras. For any unary $\lang$-term $t$, we define inductively
\[
t^0(x) = x \quad \text{and} \quad t^{k+1}(x) = t(t^{k}(x)) \ \text{ for } k \in \N.
\]
We say that $t$ is {\em $n$-potent} (for $n \in \N$) in $\V$ if $\V \models t^{n+1}(x) \eq t^{n}(x)$. 

\begin{Theorem}\label{t:general}
Let $\V$ be a coherent variety of $\lang$-algebras with a meet-semilattice  term-definable reduct and a term $t(x)$ satisfying
\[
\V \models t(x) \le x \quad \text{ and } \quad \V \models x \le y\, \Rightarrow\,  t(x) \le t(y). 
\]  
Suppose also that $\V$ satisfies the following \emph{fixpoint embedding condition} with respect to $t(x)$:

\begin{enumerate}
\item[\textup{(FE)}]
For any finitely generated $\m{A}\in \V$ and $a \in A$, there exists an algebra $\m{B}\in\V$ such that $\m{A}$ is a subalgebra of $\m{B}$ and the join $\bigwedge_{k \in \N} t^k(a)$ exists in $\m{B}$ and satisfies  
\[
 \bigwedge_{k \in \N} t^k(a) = t(\bigwedge_{k \in \N} t^k(a)).
\]
\end{enumerate}
Then $t$ is $n$-potent in $\V$ for some $n \in \N$.
\end{Theorem}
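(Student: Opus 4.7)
The plan is to prove the contrapositive: assuming $t$ is not $n$-potent in $\V$ for any $n \in \N$, I would combine the fixpoint embedding condition with coherence (via Proposition~\ref{p:characterization}) to derive a contradiction by exhibiting a compact congruence on a finitely generated free algebra of $\V$ whose restriction to a free subalgebra fails to be compact.

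The starting point I would consider is the compact congruence
\[
\The = \cg{\F(x, y)}\{\langle y, t(y)\rangle, \langle y, y \wedge x\rangle\}
\]
on $\F(x, y)$, which presents $\m{C} = \F(x, y)/\The$ as a finitely presented algebra of $\V$ in which the image of $y$ is a fixpoint of $t$ lying below $x$, so that $y \le t^{k}(x)$ holds in $\m{C}$ for every $k \in \N$. Applying (FE) to $\F(x)$ and the element $x$ would then provide an extension $\m{B} \supseteq \F(x)$ in $\V$ containing $b := \bigwedge_{k \in \N} t^{k}(x)$ as a fixpoint of $t$, together with a canonical homomorphism $\m{C} \to \m{B}$ sending $x \mapsto x$ and $y \mapsto b$, so that $y$ is concretely realised as the infimum of the descending chain $\{t^{k}(x)\}_{k \in \N}$.

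With this setup in place, I would invoke coherence through condition~(2) of Proposition~\ref{p:characterization}, applied to a suitable compact congruence, to conclude that the corresponding restriction $\Psi$ to a free subalgebra is finitely generated. From any finite generating set of $\Psi$ I would then read off an upper bound $N$ on the iterates $t^{k}$ that appear, and exhibit an explicit witness, a natural candidate being $t^{N}(x)$ in $\F(x)$ playing the role of $y$, which satisfies the finite generating set but is then forced to satisfy $t^{N}(x) = t^{N+1}(x)$ by the constraints encoded in $\The$, contradicting the failure of $n$-potency.

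The main obstacle, as I see it, is pinning down the correct compact congruence and the correct subalgebra to restrict to. The obvious choice $\The$ above restricts trivially to $\F(x)$ under (FE), because the homomorphism $\m{C} \to \m{B}$ is injective on the $\F(x)$-part and hence $\The \cap F(x)^{2}$ is just the diagonal. The actual argument presumably involves a subtler congruence, likely on $\F(x_1, \dots, x_m, y)$ for several auxiliary generators arranged so that the interaction between the meet-semilattice reduct and the monotonicity of $t$ prevents collapse under (FE), whose restriction is then forced to encode the full descending chain of iterates and therefore cannot be finitely generated unless $t$ is $n$-potent for some $n$. Carrying out this construction, and verifying that finite generation of the restriction translates into an explicit $n$-potency identity rather than being absorbed by (FE), is the delicate technical core of the argument that I would need to work out.
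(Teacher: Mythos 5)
Your overall strategy --- argue contrapositively, invoke coherence through Proposition~\ref{p:characterization}, and use (FE) to realise the would-be fixpoint in an extension --- matches the paper's, and you correctly diagnose that your two-variable congruence generated by $\{\langle y,t(y)\rangle,\langle y,y\wedge x\rangle\}$ restricts trivially to $\F(x)$. But the construction you defer to ``work out'' is precisely the content of the proof, not a residual technicality. The paper's choice is the three-variable set $\Si = \{y \le x,\ x \le z,\ x \eq t(x)\}$ on $\F(x,y,z)$: the fixpoint variable $x$ is sandwiched between a lower witness $y$ and an upper witness $z$, and one projects onto the pair $(y,z)$, eliminating $x$. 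The role of the extra \emph{lower} variable is that the projected consequence relation becomes exactly $\Pi = \{y \le t^k(z) \mid k\in\N\}$: the direction $\Si\mdl{\V}\Pi$ is immediate from $t(x)\le x$ and monotonicity, while the converse (every $(y,z)$-consequence of $\Si$ already follows from $\Pi$) is where (FE) enters, by extending any countermodel of $\Pi\mdl{\V}\eps$ with $e(x):=\bigwedge_{k}t^k(e(z))$. Your configuration keeps only the upper witness, so the projection collapses to the diagonal; adding unspecified ``auxiliary generators'' does not obviously repair this, and no candidate is given.

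The second missing piece is the conversion of finiteness into an $n$-potency identity. Coherence yields a finite $\De(y,z)$ equivalent to $\Si$ on $(y,z)$-equations; by compactness, and because $t$ decreasing gives $\{y\le t^{k+1}(z)\}\mdl{\V} y\le t^k(z)$, a single premise $y\le t^n(z)$ entails $\De$ and hence entails $y\le t^{n+1}(z)$; the substitution $y\mapsto t^n(z)$ then yields $\mdl{\V} t^n(z)\le t^{n+1}(z)$. Your sketch gestures at ``reading off an upper bound $N$'' from a finite generating set, but supplies neither the entailment chain nor the substitution step that turns a quasi-identity into the identity $t^{n+1}(x)\eq t^n(x)$. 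As it stands, the proposal identifies the right machinery but leaves both the key construction and the final derivation open.
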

\begin{proof}
Let $\V$ and $t(x)$ be as in the statement of the theorem, and  define
\[
\Si = \{y \le x, x \le z, x \eq t(x)\}
\quad
\mbox{and}
\quad
\Pi = \{y \le t^k(z)  \mid k \in \N\}.
\]
We prove that for any equation $\eps(y,z)$,
\[
\Si \mdl{\V} \eps(y,z) \iff \Pi \mdl{\V} \eps(y,z).
\]
For the right-to-left direction, it suffices to observe that $\Si \mdl{\V} y \le t^k(z)$ for each $k \in \N$, and hence $\Si \mdl{\V} \Pi$. For the converse direction, suppose contrapositively that $\Pi \not \mdl{\V} \eps(y,z)$. Since only two variables occur in $\Pi$, there exist a finitely generated $\m{A} \in \V$ and a homomorphism $e \colon \Tm(y,z) \to \m{A}$ such that $\Pi \subseteq \ker(e)$, but $\eps\not \in \ker(e)$. Let $a=e(y)$.  By assumption, $\m{A}$ is a subalgebra of some $\m{B}\in \V$ such that $\bigwedge_{k \in \N} t^k(a)$ exists in $\m{B}$ and satisfies  
\[
 \bigwedge_{k \in \N} t^k(a) = t(\bigwedge_{k \in \N} t^k(a)).
\]
Since $x$ does not appear in $\Pi \cup\{\eps\}$, we may extend $e$ to a homomorphism $e \colon \Tm(x,y,z) \to \m{B}$ by
\[
e(x) = \bigwedge_{k \in \N} t^k(b).
\]
We have $e(y) \le t^k(b)$ for each $k \in \N$, so clearly $e(y) \le e(x) \le e(z)$. Moreover,  by assumption, 
\[
e(x) = \bigwedge_{k \in \N} t^k(b) =  t(\bigwedge_{k \in \N} t^k(a)) = e(t(x)).
\]
Hence $\Si \subseteq \ker(e)$ and we obtain $\Si \not \mdl{\V} \eps(y,z)$. 

Finally, since $\V$ is coherent, by Theorem~\ref{th:coherent}, there exists a finite set of equations $\De(y,z)$ such that for any equation $\eps(y,z)$,
\[
\Si  \mdl{\V} \eps(y,z) \iff \De \mdl{\V} \eps(y,z).
\]
In particular, $\Si \mdl{\V} \De$, and so, by the above implication, $\Pi \mdl{\V} \De$. Moreover, using compactness and the fact that $\De$ is finite, $\Pi' \mdl{\V} \De$ for some finite $\Pi' \subseteq \Pi$. But also, since $t$ is decreasing, $\{y \le t^{k+1}(z)\} \mdl{\V} y \le t^k(z)$ for each $k \in \N$. Hence for some particular $n \in \N$, we have $\{y \le t^n(z)\} \mdl{\V} \De$. Recall that $\Si  \mdl{\V} y \le t^{n+1}(z)$, and so also $\De  \mdl{\V} y \le t^{n+1}(z)$. Combining consequences, we obtain $\{y \le t^n (z)\}\mdl{\V} y \le t^{n+1}(z)$. Finally, substituting $y$ with $t^n(z)$, we obtain $\mdl{\V} t^n(z) \le t^{n+1}(z)$. That is, $t$ is $n$-potent in $\V$.
\end{proof}

Let us mention that the proof of Theorem~\ref{t:general} can be used to obtain direct counterexamples to coherence. Suppose that $\V$ satisfies the conditions of the theorem and that $t$ is not $n$-potent in $\V$ for some $n \in \N$. Let $\The$ be the compact congruence on $\F(x,y,z)$ generated by $\{y \le x, x \le z, x \eq t(x)\}$, and let $\Psi$ be the congruence on $\F(y,z)$ generated by $\{y \le t^n(z)  \mid n \in \N\}$. Then $\F(y,z) / \Psi$ is a finitely generated but not finitely presented member of $\V$ that embeds into the finitely presented algebra $\F(x,y,z) / \The$ in $\V$. 


\section{Canonical extensions}\label{s:canonical}

In this section, we describe a second tool for establishing the failure of coherence and uniform deductive interpolation for a variety: the theory of canonical extensions. To keep the paper reasonably self-contained, we begin with a brief review of this theory based on the development in~\cite{GH01}. The reader familiar with canonical extensions may skip this section, with the exception of Lemmas~\ref{fixpoint} and~\ref{exp-fix}, which are required for the applications in Section~\ref{s:casestudies}, and do not appear in~\cite{GH01} or, as far as we can tell, elsewhere in the literature.  

A \emph{completion} of a poset $\m{P}$ is a pair $\lan e,\m{C} \ran$, where $\m{C}$ is a complete lattice and $e$ is an order embedding of $\m{P}$ into $\m{C}$ that preserves all existing finite meets and joins of $\m P$. An element $a\in C$ is called \emph{open} if $a=\bigvee e(X)$ for some  subset $X$ of $P$, where the join is taken in $C$; note that in this case $X$ can be taken to be the set $\{x \in P \mid a \leq e(x)\}$. Dually, $a\in C$ is \emph{closed} if $a=\bigwedge e(X)$ for some $X \subseteq P$. We will use $K$ and $O$ to denote the sets of closed and open elements of $\m{C}$, respectively.  A completion $\lan e, \m{C} \ran$ is called
\begin{newlist}
\item[$\bullet$] \emph{dense} if every element of $C$ is both a join of closed elements and a meet of open elements;
\item[$\bullet$]  \emph{compact} if for any $A\subseteq K$ and $B\subseteq O$, we have $\bigwedge A\leq\bigvee B$ if and only if there are finite subsets $A_0$ of $A$ and $B_0$ of $B$ satisfying $\bigwedge A_0\leq\bigvee  B_0$. 
\end{newlist}
A dense and compact completion $\lan e, \m{C} \ran$ of $\m{P}$  is called a \emph{canonical extension}. 

\begin{Theorem}
Any poset\/ $\m{P}$ has a canonical extension $\lan e,\m{C} \ran$. Moreover, if $\lan e',\m{C}' \ran$ is another canonical extension of\/ $\m{P}$, then there exists a lattice isomorphism $i\colon \m{C}'\to \m{C}$ such that $i \circ e' = e$.
\end{Theorem}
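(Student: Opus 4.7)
For existence, the approach is to construct $\lan e, \m{C} \ran$ explicitly via a polarity on filters and ideals of $\m{P}$. Let $\mathcal{F}$ denote the collection of up-directed up-sets (filters) of $\m{P}$, including the principal ones $\ua p$, and let $\mathcal{I}$ denote the down-directed down-sets (ideals). Equip $\mathcal{F} \times \mathcal{I}$ with the incidence relation $F \, R \, I \iff F \cap I \neq \emptyset$, and take $C$ to be the collection of Galois-stable subsets of $\mathcal{F}$ under the induced closure operator, ordered by reverse inclusion; this is automatically a complete lattice. Define $e \colon \m{P} \to \m{C}$ by sending each $p$ to the Galois closure of $\{F \in \mathcal{F} \mid p \in F\}$, equivalently by identifying $p$ with its principal filter modulo closure.

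Having fixed this construction, I would verify four items. First, $e$ is an order embedding preserving any existing finite meets and joins of $\m{P}$: injectivity and monotonicity follow because principal filters reflect the order of $\m{P}$, while preservation of existing finite meets and joins is a routine check against the polarity. Second, the closed elements $K$ of $\m{C}$ correspond order-isomorphically to filters via $F \mapsto \bigwedge_{p \in F} e(p)$, and dually $O$ corresponds to ideals via $I \mapsto \bigvee_{p \in I} e(p)$. Third, density reduces to the observation that every Galois-stable set of filters is at once the intersection of its principal stable extensions and the union of their dual ideals, which translates exactly to each element of $\m{C}$ being a join of closed elements and a meet of open elements. Fourth, compactness---that $\bigwedge A \leq \bigvee B$ for $A \subseteq K$ and $B \subseteq O$ forces a finite witness---is exactly the assertion that every filter meeting $\bigcap A$ meets some ideal in $B$, and the finitary nature of the incidence $FRI$ refines this to finitely many filters and ideals.

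For uniqueness, given two canonical extensions $\lan e, \m{C} \ran$ and $\lan e', \m{C}' \ran$, I would build the isomorphism $i$ in stages. First, define $\iota_K \colon K \to K'$ by $\iota_K(x) = \bigwedge e'(\{p \in P \mid x \leq e(p)\})$; density of $\m{C}$ together with compactness of $\m{C}'$ makes $\iota_K$ an order embedding, and the symmetric construction from $\m{C}'$ back to $\m{C}$ produces a two-sided inverse, so $\iota_K$ is an order isomorphism fixing $e(P)$. Dually, construct $\iota_O \colon O \to O'$. Then extend to all of $C$ via density by setting
\[
i(x) = \bigvee\{\iota_K(k) \mid k \in K,\ k \leq x\},
\]
and use density together with the compatible behaviour of $\iota_K$ and $\iota_O$ to show that also $i(x) = \bigwedge\{\iota_O(o) \mid o \in O,\ x \leq o\}$. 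This confirms that $i$ is well-defined, order-preserving, and by symmetry a bijection; the identity $i \circ e = e'$ is immediate since each $e(p)$ is simultaneously closed and open and $\iota_K(e(p)) = e'(p)$ by construction.

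The main obstacle is the compactness argument that underpins $\iota_K$ and its extension to $i$. One must show that the set $\{p \in P \mid x \leq e(p)\}$ of ``upper bounds'' of $x \in K$ inside $\m{C}$ carries exactly the information required to recover the matching closed element in $\m{C}'$. This is where the interplay between density on the source side (so that $x$ is genuinely pinned down by how it sits relative to $e(P)$) and compactness on the target side (so that inequalities between meets of $e'$-images reduce to finitary filter--ideal incidence) does the essential work, and getting this interplay to mesh cleanly is the crux of the proof.
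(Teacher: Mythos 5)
The paper does not actually prove this theorem: it is recalled as known background, with the development deferred to Gehrke--Harding \cite{GH01} and the literature on canonical extensions of posets. Your sketch reconstructs the standard proof from that literature --- existence via the Galois connection induced by the filter--ideal incidence relation $F\cap I\neq\emptyset$, closed/open elements matching filters/ideals, and uniqueness by transporting closed elements using density on one side and compactness on the other --- so the overall architecture is the right one and is essentially what the cited sources do.

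That said, two things need repair before the sketch becomes a proof. First, your conventions are inverted in a way that breaks the construction as literally written: filters of a poset must be taken to be \emph{down}-directed up-sets (and ideals \emph{up}-directed down-sets), since otherwise $\ua p$ need not be a filter at all and $e$ fails to preserve existing finite meets (you need $p,q\in F$ to force $p\mt q\in F$ via a common \emph{lower} bound in $F$); likewise the Galois-stable sets must be ordered by inclusion, not reverse inclusion, or else $p\mapsto\{F\mid p\in F\}$ is order-\emph{reversing}. Second, the compactness clause is the substantive step and you have only asserted it: ``the finitary nature of the incidence $FRI$'' does not by itself produce finite $A_0\subseteq A$ and $B_0\subseteq B$. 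The actual argument reduces $\bigwedge A\leq\bigvee B$ to a statement about filters containing $\bigcup_a F_a$ meeting ideals containing $\bigcup_b I_b$, and then uses the directedness of filters and ideals to collapse finite meets of $e$-images to single elements of $P$; this same reduction (that $\bigwedge e[F]\leq e(q)$ forces $q\in F$) is also what makes your map $\iota_K$ well defined and order-reflecting in the uniqueness half, so it cannot be skipped. Minor points: the theorem asks for $i\colon\m{C}'\to\m{C}$ with $i\circ e'=e$ (you built the map in the opposite direction, which is harmless by symmetry), and you should note explicitly that an order isomorphism between complete lattices is automatically a lattice isomorphism, which is why establishing $i$ as an order isomorphism suffices.
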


Following standard practice, from now on we will speak of \emph{the} canonical extension of $\m{P}$, denoted by $\m{P}^\si$. We will also assume that the embedding is realised as the natural identity embedding, so that $\m{P}\in \op{S}(\m{P}^\si)$.

Maps between posets also have canonical extensions. Let $\m{P}$ and $\m{Q}$ be posets, and let $f\colon P\to Q$ be any map. The maps $f^\si, f^\pi\colon P^\si\to Q^\si$ are defined as follows:
\begin{align*}
f^\si(x) &= \bigvee\bigl\{\bigwedge\{f(a)\mid a\in P, \ p\leq a\leq q\} \mid  p\in K, q\in O, \ p\leq x\leq q\bigr\};\\
f^\pi(x) &= \bigwedge\bigl\{\bigvee\{f(a)\mid a\in P, \ p\leq a\leq q\} \mid p\in K, q\in O, \ p\leq x\leq q\bigr\}.
\end{align*}
The following two lemmas are easy consequences of these definitions. 

\begin{Lemma}\label{sigma-pi-ext}
Both $f^\si$ and $f^\pi$ extend $f$. Moreover, $f^\si\leq f^\pi$ under the pointwise ordering.
\end{Lemma}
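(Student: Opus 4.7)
The plan is to verify the three sub-claims directly from the definitions: $f^\si$ extends $f$, $f^\pi$ extends $f$, and $f^\si \leq f^\pi$ pointwise. The key underlying observation is that every $a \in P$ is simultaneously closed and open in $P^\si$, since $a = \bigwedge\{a\} = \bigvee\{a\}$, so $a \in K \cap O$.

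For the two extension claims I would take $a \in P$ and exploit the fact that $(p,q) = (a,a)$ is then a legitimate sandwiching pair in the outer join defining $f^\si(a)$. The corresponding inner meet is $\bigwedge\{f(b) \mid b \in P,\, a \leq b \leq a\} = f(a)$, giving $f(a) \leq f^\si(a)$. For the reverse inequality, any sandwiching pair $(p,q)$ with $p \leq a \leq q$ has $a$ itself in the indexing set $\{b \in P \mid p \leq b \leq q\}$, so the inner meet is bounded above by $f(a)$, yielding $f^\si(a) \leq f(a)$ after taking the outer join. The identity $f^\pi(a) = f(a)$ then follows by the exact order-dual argument.

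For the inequality at a general $x \in P^\si$, I would use the standard fact that $\bigvee_i c_i \leq \bigwedge_j d_j$ in a complete lattice iff $c_i \leq d_j$ for every pair $(i,j)$. This reduces the problem to showing that for any sandwiching pairs $(p_1, q_1)$ and $(p_2, q_2)$ of $x$,
\[
\bigwedge\{f(a) \mid a \in P,\, p_1 \leq a \leq q_1\} \leq \bigvee\{f(a) \mid a \in P,\, p_2 \leq a \leq q_2\}.
\]
The natural strategy is to exhibit a common witness $a \in [p_1, q_1] \cap [p_2, q_2] \cap P$, for then $f(a)$ will simultaneously bound the left-hand meet above and the right-hand join below. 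Since $(p_1 \vee p_2,\, q_1 \wedge q_2)$ again sandwiches $x$, it is enough to produce some $a \in [p_1 \vee p_2,\, q_1 \wedge q_2] \cap P$.

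The only genuinely non-routine step, and the one I expect to be the main obstacle, is the existence of this $a$. I would handle it by first invoking the standard closure properties that $K$ is closed under finite joins and $O$ under finite meets, so that $p_1 \vee p_2 \in K$ and $q_1 \wedge q_2 \in O$. Writing these as $\bigwedge X$ and $\bigvee Y$ with $X, Y \subseteq P$ and applying compactness to $p_1 \vee p_2 \leq q_1 \wedge q_2$ then furnishes finite $X_0 \subseteq X$ and $Y_0 \subseteq Y$ with $\bigwedge X_0 \leq \bigvee Y_0$, from which the required element of $P$ is extracted. This is the only place in the argument where density and compactness of the canonical extension are used in an essential way; the rest is a bookkeeping unpacking of the defining formulas for $f^\si$ and $f^\pi$.
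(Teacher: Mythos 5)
The paper offers no proof of this lemma (it is dismissed as an ``easy consequence of the definitions''), and your argument is precisely the standard one the authors intend: the two extension claims follow from each $a\in P$ being both closed and open, and $f^\si\leq f^\pi$ reduces, via the refined sandwiching pair $(p_1\vee p_2,\,q_1\wedge q_2)$, to the nonemptiness of $[p,q]\cap P$ for a closed--open pair $p\leq q$, which you correctly identify as the one genuine step and obtain from compactness together with the closure of $K$ under finite joins and of $O$ under finite meets. Your proof is correct; the only point worth flagging is that extracting an element of $P$ from $\bigwedge X_0\leq\bigvee Y_0$ implicitly uses that the finite meet $\bigwedge X_0$ (and join $\bigvee Y_0$) exists in $P$, i.e.\ the bounded-lattice setting of~\cite{GH01}, which is exactly the setting in which the paper applies the lemma.
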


\begin{Lemma}\label{ord-pres-maps}
Let $f\colon P\to Q$ be an order-preserving map.  
\begin{newlist} 
\item[\rm (a)] $f^\si(p) = \bigwedge\{f(a) \mid a\in P,\ p\leq a\}$, for all $p\in K$;
\item[\rm (b)] $f^\pi(q) = \bigvee\{f(a) \mid a\in P,\ q\geq a\}$, for all $q\in O$;
\item[\rm (c)] $f^\si(x) = \bigvee\{f^\si(p) \mid p\in K,\ p\leq x\}$, for all $x\in P^\si$;
\item[\rm (d)] $f^\pi(x) = \bigwedge\{f^\pi(q) \mid q\in O,\ q\geq x\}$, for all $x\in P^\si$;
\item[\rm (e)] $f^\si$ and $f^\pi$ are equal on $K\cup O$.
\end{newlist}
\end{Lemma}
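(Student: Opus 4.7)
My plan is to prove the five parts in order, noting that (b) and (d) are dual to (a) and (c) respectively, and using density and compactness of the canonical extension together with Lemma~\ref{sigma-pi-ext}.

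For part (a), fix $p \in K$. Taking $(p',q')=(p,\top)$ in the defining supremum for $f^\si(p)$---admissible since $\top=\bigvee\emptyset\in O$---gives the inequality $f^\si(p)\ge\bigwedge\{f(a)\mid a\in P,\ p\le a\}$. For the reverse, I must show that each admissible pair $(p',q')$ contributes no more: by monotonicity of $f$, it suffices to exhibit, for each $a\in P$ with $p\le a$, some $b\in P$ with $p'\le b\le q'$ and $b\le a$, whence $\bigwedge\{f(c)\mid p'\le c\le q'\}\le f(b)\le f(a)$. Writing $p'=\bigwedge X$ for $X=\{c\in P\mid p'\le c\}\subseteq P$ and $q'=\bigvee Y$ for some $Y\subseteq P$, compactness yields finite $X_0\subseteq X$ and $Y_0\subseteq Y$ with $\bigwedge X_0\le\bigvee Y_0$. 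Since $p'\le p\le a$ gives $a\in X$, I may enlarge $X_0$ to $X_0\cup\{a\}$; the element $b=\bigwedge(X_0\cup\{a\})$ then lies in $P$ (in the finite-meet setting of \cite{GH01}) and satisfies $p'\le b\le\bigvee Y_0\le q'$ and $b\le a$. Part (b) is proved by the dual argument.

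For part (c), I first strengthen (a) to the statement that $\bigwedge\{f(a)\mid p'\le a\le q'\}=f^\si(p')$ for every $(p',q')$ with $p'\in K$, $q'\in O$ and $p'\le q'$: the $\ge$-direction is immediate from set inclusion, while $\le$ is exactly the compactness argument of (a), now run with $p'$ in place of $p$. Because this inner expression is independent of $q'$, the double supremum defining $f^\si(x)$ collapses to $\bigvee\{f^\si(p')\mid p'\in K,\ p'\le x\}$, where for each fixed $p'\le x$ the admissibility of some $q'$ is guaranteed by $\top\in O$. Part (d) is dual, and as a by-product both $f^\si$ and $f^\pi$ are order-preserving.

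For part (e), let $p\in K$. By (a), $f^\si(p)=\bigwedge\{f(a)\mid p\le a\}$, and by Lemma~\ref{sigma-pi-ext}, $f^\si(p)\le f^\pi(p)$. For the opposite inequality, fix $a\in P$ with $p\le a$ and take the pair $(p,a)$ in the defining meet for $f^\pi(p)$---admissible because $a\in P\subseteq K\cap O$; the associated inner supremum is $\bigvee\{f(b)\mid p\le b\le a\}$, which by monotonicity of $f$ is bounded above by $f(a)$. Hence $f^\pi(p)\le f(a)$ for every such $a$, which on meeting yields $f^\pi(p)\le f^\si(p)$. The case $q\in O$ is dual, using (b) together with the pair $(a,q)$ for $a\in P$ with $a\le q$. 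The main technical obstacle lies in the compactness-based extraction of $b\in P$ in part (a), which tacitly relies on $P$ admitting the relevant finite meets; once (a) and (b) are in hand, parts (c)--(e) follow by careful bookkeeping with the definitions, with no further use of compactness.
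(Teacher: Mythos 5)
The paper does not actually prove this lemma --- it is dismissed as an ``easy consequence of the definitions'' --- so there is no official argument to compare against step by step. Your proof is the standard one (essentially that of \cite{GH01}), and its architecture is sound: compactness to show that every admissible pair $(p',q')$ contributes at most $\bigwedge\{f(a)\mid a\in P,\ p\le a\}$ in (a); the strengthened form of (a) to collapse the double supremum in (c); and the squeeze between (a) and Lemma~\ref{sigma-pi-ext} in (e). The duality claims for (b) and (d), and the use of $e(P)\subseteq K\cap O$ in (e), are all fine.

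Two points need repair. First, your justification of admissibility is wrong as written: $\bigvee\emptyset$ is the \emph{bottom} of $\m{P}^\si$, not the top; dually, $\top=\bigwedge\emptyset$ is closed but not automatically open. What you actually need in (a) and in the collapse step of (c) is the existence of \emph{some} open element above the given $p$ (respectively $x$), and this follows from density --- every element is a meet of open elements, each of which lies above it --- except in the degenerate case of the top element when $\top\notin O$, which does not arise in the bounded setting of \cite{GH01}, where $\top\in e(P)\subseteq K\cap O$. Note also that any open $q'\ge p$ does the job for the lower bound in (a), since restricting the index set to $\{a\mid p\le a\le q'\}$ only increases the meet. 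Second, as you yourself flag, the construction of $b=\bigwedge(X_0\cup\{a\})$ presupposes that $P$ admits finite meets; for a bare poset this step is unavailable, so your argument really establishes the lemma for (semi)lattice-ordered $P$. That is all the paper ever uses --- Sections 4 and 5 work exclusively with semilattice-ordered algebras --- but the restriction should be stated rather than left implicit. With these two corrections the proof is complete.
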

If $f^\si = f^\pi$, then we say that $f$ is \emph{smooth}. An example of a non-smooth map is the implication on a Heyting algebra, viewed as a binary map from $A^\partial\times A$ to $A$. The same holds for the residuals of any order-preserving multiplication, so definitions of canonical extensions of residuated structures (see Section~\ref{s:casestudies}) must take this into account. To be more precise, 
\[
y\leq^\si x\ld^\pi z \iff
x\cdot^\si y\leq^\si z \iff 
x\leq^\si z\rd^\pi y,
\]
but these equivalences fail for $\ld^\si$ and $\rd^\si$. This example also illustrates how to obtain canonical extensions of posets with additional algebraic structure: since an $n$-ary operation $f$ on a poset $\m{P}$ is a map $f \colon P^n\to P$, we naturally obtain extensions $f^\si \colon (P^n)^\si\to P^\si$ and $f^\pi \colon (P^n)^\si\to P^\si$. Extensions $\si$ and $\pi$ commute with homomorphic images, substructures and finite direct products, so in particular, $(\m{P}^n)^\si = (\m{P}^\si)^n$ and $(\m{P}^n)^\pi = (\m{P}^\pi)^n$ for any $n \in \N$ and hence
canonical extensions of operations are computed coordinatewise. 

\begin{Lemma}\label{HSPfin-commut}
Let $\K$ be a class of algebras with term-definable poset reducts. Then canonical extensions (both $\si$ and $\pi$) of algebras from $\K$ commute with homomorphic images, subalgebras, and finite direct products.  
\end{Lemma}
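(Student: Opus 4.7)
The plan is to treat each of the three closure operators separately, and in each case first verify the statement at the level of poset reducts and then lift it to the full algebraic signature by invoking Lemma~\ref{ord-pres-maps} together with the coordinatewise behaviour of canonical extensions recorded in the paragraph preceding the lemma.

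For finite direct products, the argument is essentially already in place: the fact $(\m{P}^n)^\si = (\m{P}^\si)^n$ (and the analogue for $\pi$) reduces the claim to showing that if $f$ on $\m{A}\times\m{B}$ is given coordinatewise as $\lan f_A, f_B\ran$, then $f^\si = \lan f_A^\si, f_B^\si\ran$ on $\m{A}^\si\times\m{B}^\si$. This follows from Lemma~\ref{ord-pres-maps}(c)--(d), since closed and open elements of the product are precisely pairs of closed and open elements of the factors, and meets and joins are computed coordinatewise.

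For subalgebras, I would fix an embedding $\iota\colon \m{A}\hookrightarrow \m{B}$ and analyse $\iota^\si\colon \m{A}^\si\to \m{B}^\si$. The first step is to show $\iota^\si$ is an order-embedding: any failure $p\not\leq q$ with $p\in K_{A}$, $q\in O_{A}$ is, by compactness in $\m{A}^\si$, witnessed by some finite collection of elements of $\m{A}$, and this witness is preserved in $\m{B}^\si$. Density in $\m{A}^\si$ then extends this to arbitrary elements. The second step is to prove that for every basic operation $f$ the extension $f^\si$ computed inside $\m{A}^\si$ coincides with the restriction to $\m{A}^\si$ of $f^\si$ computed inside $\m{B}^\si$; by Lemma~\ref{ord-pres-maps}(a),(c) both values on a closed element $p\in K_A$ are equal to $\bigwedge\{f(\bar a)\mid \bar a\in A^n,\, p\leq \bar a\}$, and the two ambient meets agree by compactness. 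The general case is then reduced to this one by density.

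For homomorphic images, given a surjection $h\colon \m{A}\twoheadrightarrow \m{B}$, the map $h^\si$ is order-preserving, and I would establish surjectivity by observing that every closed element of $\m{B}^\si$ is a meet of elements of $\m{B} = h[A]$, so it is hit by the meet in $\m{A}^\si$ of any choice of $h$-preimages; density then extends surjectivity to all of $\m{B}^\si$. Preservation of each basic operation $f$ reduces, via Lemma~\ref{ord-pres-maps}(c), to the identity $h\circ f = f\circ h^n$ on $\m{A}$, which holds because $h$ is a homomorphism. The treatment of $\pi$ is dual throughout.

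The step I expect to be the main obstacle is the subalgebra case: one has to make sure that the ``inner'' canonical extension $f^\si$ computed purely from $\m{A}$ agrees with the ``outer'' restriction inherited from $f^\si$ on $\m{B}^\si$. This amounts to a careful bookkeeping of closed and open elements under $\iota^\si$, and it is essentially where compactness has to do real work to ensure that meets of $\m{A}$-elements do not drift when taken inside the larger lattice $\m{B}^\si$.
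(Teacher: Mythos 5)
The paper offers no proof of this lemma at all: it is quoted, as part of the background review, from Gehrke and Harding \cite{GH01}, so there is nothing in-paper to compare your argument against; I can only measure your sketch against the standard proof. Your decomposition (treat $\op{H}$, $\op{S}$, $\op{P}_{\mathrm{fin}}$ separately, settle the poset reduct first, then lift to the operations) is the standard one, and the finite-product case is essentially fine.

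The other two cases contain genuine gaps. First, in the subalgebra case your compactness argument runs in the wrong direction: compactness supplies finite witnesses for $\leq$, not for $\not\leq$, so a failure $p\not\leq q$ in $\m{A}^\si$ is not ``witnessed by a finite collection of elements of $\m{A}$''. The correct argument assumes $\iota^\si(p)\leq\iota^\si(q)$ in $\m{B}^\si$, applies compactness \emph{there} to extract a finite witness inside $B$, observes that the witness already lies in $A$ (using that the order comes from a term-definable reduct, so the relevant finite meets and joins are computed inside the subalgebra), and pulls it back to get $p\leq q$ in $\m{A}^\si$. Second, your identification of the ``inner'' and ``outer'' $f^\si$ on a closed element $p$ of $\m{A}^\si$ is wrong as stated: Lemma~\ref{ord-pres-maps}(a) applied in $\m{B}^\si$ yields a meet of values $f(b)$ over \emph{all} tuples $b$ from $B$ above $\iota^\si(p)$, not only those from $A$; collapsing this to a meet over $A$-tuples needs $f$ to be order-preserving (so that each $f(b)$ dominates some $f(a)$ with $a$ from $A$) together with another compactness step --- and, more generally, every appeal you make to Lemma~\ref{ord-pres-maps} silently assumes the basic operations are order-preserving, which is not a hypothesis of the lemma. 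Third, in the homomorphic-image case, passing from $h\circ f=f\circ h^n$ on $\m{A}$ to $h^\si\circ f^\si=f^\si\circ(h^\si)^n$ on $\m{A}^\si$ is precisely where $\si$-extensions misbehave: the paper warns just before Lemma~\ref{isotone} that extensions of maps do not compose well, and Lemma~\ref{isotone} gives only inequalities. The standard proof circumvents this by first showing that the extension of a homomorphism of the poset (lattice) reduct is a complete homomorphism and then matching closed and open elements explicitly; that work is absent from your sketch.
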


If an operation is order-preserving in some coordinates and order-in\-ver\-ting in others, then it is often necessary to mix and match $\m{P}$ with $\m{P}^\partial$ accordingly, as shown by the residuation example above. For our purposes in this article, the maps obtained in this way are all we need. We will call them \emph{isotone} from now on, and treat them simply as order-preserving in each coordinate, trusting the reader to work out the appropriate dualisations.  

Extensions of arbitrary maps do not behave well under composition, but extensions of isotone maps are better behaved.

\begin{Lemma}\label{isotone}
Let $\m{P}$ be a poset, and let $f\colon P^n\to P$ and 
$g_1,\dots, g_n\colon P^k\to P$ be isotone maps. Then
\begin{newlist}
\item[\rm (a)] $(f(g_1,\dots, g_n))^\si\leq f^\si(g_1^\si,\dots, g_n^\si) \leq f^\si(g_1^\pi,\dots, g_n^\pi)$;
\item[\rm (b)] $f^\pi(g_1^\si,\dots, g_n^\si)\leq f^\pi(g_1^\pi,\dots, g_n^\pi)\leq (f(g_1,\dots, g_n))^\pi$.  
\end{newlist}
\end{Lemma}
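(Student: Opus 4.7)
The plan is to prove (a) and (b) by dual arguments, so I will focus on (a). The second inequality of (a), $f^\si(g_1^\si,\ldots,g_n^\si) \leq f^\si(g_1^\pi,\ldots,g_n^\pi)$, is routine: by Lemma~\ref{sigma-pi-ext}, $g_i^\si \leq g_i^\pi$ pointwise for each $i$, and $f^\si$ is isotone on $(P^\si)^n$ because $f$ is isotone (Lemma~\ref{ord-pres-maps}(a),(c) applied coordinatewise). Applying an isotone $f^\si$ to componentwise-larger inputs therefore yields a larger output.

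The real work is the first inequality of (a), $(f(g_1,\ldots,g_n))^\si \leq f^\si(g_1^\si,\ldots,g_n^\si)$. My plan is to reduce it to a pointwise comparison on closed elements of $(P^k)^\si = (P^\si)^k$ via density, and then settle the closed-element case by a compactness argument. Both sides are isotone maps on $(P^k)^\si$. By Lemma~\ref{ord-pres-maps}(c) applied to the isotone composite $f(g_1,\ldots,g_n) \colon P^k \to P$, one has $(f(g_1,\ldots,g_n))^\si(x) = \bigvee\{(f(g_1,\ldots,g_n))^\si(p) : p \in K,\, p \leq x\}$. Since the composite $f^\si \circ (g_1^\si,\ldots,g_n^\si)$ is isotone, $f^\si(g^\si(x)) \geq f^\si(g^\si(p))$ for every closed $p \leq x$. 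Hence, if the inequality holds on closed $p$, taking joins over $p \leq x$ yields the inequality on all $x$.

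For a closed $p \in K$, I will unfold both sides using Lemma~\ref{ord-pres-maps}(a). The left-hand side becomes $\bigwedge\{f(g_1(a),\ldots,g_n(a)) : a \in P^k,\, p \leq a\}$. For the right-hand side, observe that each $g_i^\si(p) = \bigwedge\{g_i(a) : a \in P^k,\, p \leq a\}$ is a closed element of $P^\si$, so $(g_1^\si(p),\ldots,g_n^\si(p))$ is closed in $(P^\si)^n = (P^n)^\si$ (by Lemma~\ref{HSPfin-commut}); Lemma~\ref{ord-pres-maps}(a) then evaluates $f^\si(g_1^\si(p),\ldots,g_n^\si(p))$ as $\bigwedge\{f(b_1,\ldots,b_n) : b_i \in P,\, g_i^\si(p) \leq b_i \text{ for each } i\}$. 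The required comparison of these two meets is then obtained using the compactness of the canonical extension: each coordinate inequality $g_i^\si(p) \leq b_i$ in $P^\si$ (with $g_i^\si(p)$ closed and $b_i$ open) is witnessed by a finite subset $A_i \subseteq \{a \in P^k : p \leq a\}$ with $\bigwedge\{g_i(a) : a \in A_i\} \leq b_i$; combining these witnesses into $A = \bigcup_i A_i$ and then using the isotonicity of $f$ together with the finitary meet-behaviour of the $g_i$ produces the required upper bound $f(b)$ on the left-hand meet.

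Part (b) follows by the dual argument: work with open elements in place of closed, joins in place of meets, and $\pi$-extensions in place of $\si$-extensions, using Lemma~\ref{ord-pres-maps}(b) and (d) in the analogous roles. The main obstacle throughout is the compactness-based step for the nontrivial inequality at a closed (respectively open) element: one has to transfer the relation $g^\si(p) \leq b$ in $(P^\si)^n$ (respectively $b \leq g^\pi(q)$) back into a finitary statement living in $P^k$, and then ensure that this finitary statement interacts correctly with the isotonicity of $f$ so as to yield the desired inequality between the canonical-extension values.
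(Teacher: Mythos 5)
The paper offers no proof of this lemma (it is quoted from the canonical-extensions literature, cf.\ Gehrke--Harding and J\'onsson), so your argument must stand on its own. Its architecture is the standard one and is correct as far as it goes: the second inequality of (a) follows from $g_i^\si\leq g_i^\pi$ and the isotonicity of $f^\si$; the first inequality reduces, via Lemma~\ref{ord-pres-maps}(c) and the isotonicity of $f^\si\circ(g_1^\si,\dots,g_n^\si)$, to the case of a closed $p$; and both sides at a closed $p$ unfold via Lemma~\ref{ord-pres-maps}(a) into meets indexed, respectively, by the elements of $P^k$ above $p$ and by the tuples $b\in P^n$ above the closed tuple $(g_1^\si(p),\dots,g_n^\si(p))$.

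The gap is in the final step. Fix $b\in P^n$ with $g_i^\si(p)\leq b_i$ for all $i$; you must show $\bigwedge\{f(g_1(a),\dots,g_n(a)) \mid a\in P^k,\ p\leq a\}\leq f(b)$. Compactness does give finite sets $A_i\subseteq\{a \mid p\leq a\}$ with $\bigwedge_{a\in A_i}g_i(a)\leq b_i$, but ``combining these witnesses into $A=\bigcup_iA_i$ and using the isotonicity of $f$ together with the finitary meet-behaviour of the $g_i$'' does not close the argument: $f$ is merely isotone, so it cannot be pushed through a finite meet, and the $g_i$ have no meet-preservation properties (finite meets need not even exist in the poset $P$). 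Concretely, from $g_1(a')\wedge g_1(a'')\leq b_1$ with neither $g_1(a')\leq b_1$ nor $g_1(a'')\leq b_1$, one cannot conclude $f(g_1(a'))\wedge f(g_1(a''))\leq f(b_1)$ for an arbitrary isotone $f$. The missing ingredient is that the set $D=\{a\in P^k \mid p\leq a\}$ is \emph{down-directed} when $p$ is closed --- immediate for the semilattice-ordered algebras of this paper, since the embedding preserves existing finite meets and so $D$ is closed under them, and built into the filter-based construction of $\m{P}^\si$ for general posets (it is also exactly the hypothesis the paper makes explicit in Lemma~\ref{fixpoint}). Down-directedness supplies a single $a^*\in D$ below every element of the finite set $A$; then $g_i(a^*)\leq b_i$ for every $i$, hence $f(g_1(a^*),\dots,g_n(a^*))\leq f(b)$ by isotonicity of $f$, and this single term bounds the left-hand meet. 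With that insertion, and its order-dual for part (b), the proof is complete.
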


We will call a class of algebras $\C$ a class of \emph{semilattice-ordered algebras} if the following conditions hold:
\begin{newlist}
\item[\rm (i)] Every algebra in $\C$ has a (uniformly) term-definable semilattice reduct.
\item[\rm (ii)] Each operation $o$ in the signature of $\C$ has a 
canonical extension, $o^\si$ or $o^\pi$, determined by $\C$.
\end{newlist}  
If $\C = \{\m{A}\}$, then we just call $\m{A}$ a {\em semilattice-ordered algebra}. For any $\m{A}\in\C$,  we let $\m{A}^\si$ denote the universe of the canonical extension of the poset reduct of $\m{A}$ equipped with the canonical extensions of the operations of $\m{A}$.  

Let $\C$ be a class of semilattice-ordered algebras. Following~\cite{Jon94}, a term $t$ will be called 
\begin{newlist}
\item[$\bullet$] $\C$-\emph{expanding} if $t^{\m{A}^\si} \geq (t^{\m{A}})^\si$, for all $\m{A}\in\C$;
\item[$\bullet$] $\C$-\emph{contracting} if $t^{\m{A}^\si} \leq (t^{\m{A}})^\si$, for all $\m{A}\in\C$;
\item[$\bullet$] $\C$-\emph{stable} if $t^{\m{A}^\si} = (t^{\m{A}})^\si$, for all $\m{A}\in\C$.
\end{newlist}
For a single algebra $\m{A}$, we will write $\m{A}$-expanding (contracting, stable), instead of the formally correct $\{\m{A}\}$-expanding, etc. By analogy, we will use $\C$-isotone and $\m{A}$-isotone, to mean isotone on each member of $\C$, and isotone on each member of $\{\m{A}\}$, i.e., isotone in $\m{A}$. Note that, despite the analogy, isotonicity is a property a term has with respect to a single class, whereas being expanding, contracting, or stable are properties a term has with respect to a pair of classes: the class $\C$ and the class $\C^\si = \{\m{C}^\si \mid \m{C}\in\C\}$. Often, $\C$ will be clear from the context and omitted. These definitions can obviously be extended to cover $\pi$ extensions and mixed cases, but  are not needed here.

Since a term $t$ is isotone if and only if it is a composition of isotone basic
operations, the first inequality in Lemma~\ref{isotone}(a) implies that all
isotone terms are expanding. Not all isotone terms are stable, but several
important ones are: e.g., the lattice operations and the Boolean complement.
The next lemma makes this observation precise.   

\begin{Lemma}\label{meet-is-meet}
For a lattice $\m{L}$, the extensions $\mt^\si$ and $\mt^\pi$ are equal to the
meet in $\m{L}^\si$. Similarly, $\jn^\si$ and $\jn^\pi$ are equal to the join in
$\m{L}^\si$. If $\m{L}$ is distributive, then so is $\m{L}^\si$. If $\m{B}$ is a
Boolean algebra, then so is $\m{B}^\si$; moreover, $\neg^\si$ and $\neg^\pi$ are
both equal to the Boolean complement in $\m{B}^\si$. 
\end{Lemma}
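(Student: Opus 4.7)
The plan is to verify each of the four claims in turn, relying on the density and compactness properties of the canonical extension.

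For the assertion that $\mt^\si = \mt^\pi$ equals the meet in $\m{L}^\si$: first compute $\mt^\si$ on closed elements. Since closed elements of the product $(L \times L)^\si = (L^\si)^2$ are pairs $(p_1, p_2)$ of closed elements of $L^\si$, Lemma~\ref{ord-pres-maps}(a) applied to the isotone map $\mt \colon L \times L \to L$ gives
\[
\mt^\si(p_1, p_2) = \bigwedge\{a_1 \mt a_2 \mid a_1, a_2 \in L, \ a_1 \geq p_1, \ a_2 \geq p_2\}.
\]
Using that $\m{L} \hookrightarrow \m{L}^\si$ preserves existing finite meets, together with the standard identity $\bigwedge X_1 \mt \bigwedge X_2 = \bigwedge\{a_1 \mt a_2 \mid a_1 \in X_1, a_2 \in X_2\}$ in any complete lattice (applied to $X_i = \{a \in L \mid a \geq p_i\}$), this reduces to $p_1 \mt p_2$ computed in $\m{L}^\si$. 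For general $x, y \in L^\si$, Lemma~\ref{ord-pres-maps}(c) yields
\[
\mt^\si(x,y) = \bigvee\{p_1 \mt p_2 \mid p_1, p_2 \in K, \ p_1 \leq x, \ p_2 \leq y\},
\]
and a meet-continuity property of $\m{L}^\si$ with respect to up-directed joins of closed elements (itself a consequence of compactness) shows that the right-hand side equals $x \mt y$ in $\m{L}^\si$. A dual argument using Lemma~\ref{ord-pres-maps}(b,d) yields $\mt^\pi = \mt$ in $\m{L}^\si$, and order duality handles the statements for $\jn^\si$ and $\jn^\pi$.

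For distributivity, assume $\m{L}$ is distributive and verify $x \mt (y \jn z) = (x \mt y) \jn (x \mt z)$ in $\m{L}^\si$ by first checking it for closed $x, y, z \in K$, where, using the formulas just established, the identity reduces to distributivity of $\m{L}$ together with the complete-lattice identities above; then extend to arbitrary elements of $L^\si$ using density (each being a join of closed elements below it and a meet of open elements above it). For the Boolean case, observe that $\m{B}^\si$ is already distributive, so complements in it are unique. The Boolean complement $\neg \colon B \to B$ is order-reversing, hence an isotone map $\m{B}^\partial \to \m{B}$. Using Lemma~\ref{ord-pres-maps}, compute $\neg^\si$ on closed/open elements, then verify the complement identities $x \mt \neg^\si x = 0$ and $x \jn \neg^\si x = 1$ first on closed and open elements via compactness and the corresponding identities in $\m{B}$, then on all of $B^\si$ by density. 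The same computation for $\neg^\pi$, together with uniqueness of complements, forces $\neg^\si = \neg^\pi$ and identifies both with the Boolean complement in $\m{B}^\si$.

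The main obstacle will be justifying the upper-bound inequality $\mt^\si(x, y) \geq x \mt y$ in $\m{L}^\si$ for arbitrary $x, y$. Density alone does not yield this: in a general (non-distributive) complete lattice, meet need not distribute over joins, so one cannot simply assert $x \mt y = \bigvee\{p_1 \mt p_2 \mid p_1 \leq x, p_2 \leq y, \ p_1, p_2 \in K\}$. The needed distribution over up-directed joins of closed elements is a delicate consequence of compactness, typically obtained by contradiction: assume the right-hand join is bounded above by some open $q$ with $x \mt y \not\leq q$; using compactness, produce $a_1, a_2 \in L$ with $a_1 \geq x$, $a_2 \geq y$, and $a_1 \mt a_2 \leq q$, contradicting the density-based choice of $q$.
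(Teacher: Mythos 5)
First, note that the paper offers no proof of this lemma: it is part of the background review imported from~\cite{GH01}, so there is no in-paper argument to compare against. Your overall architecture --- compute $\mt^\si$, $\mt^\pi$ on closed and open tuples via Lemma~\ref{ord-pres-maps}(a),(b), pass to arbitrary elements via (c),(d), and use uniqueness of complements in a distributive lattice for the Boolean part --- is the standard one, and your computation on closed tuples is correct. But the step you single out as the ``main obstacle'' is not an obstacle, and your proposed workaround would not work as stated. To get $x \mt y \le \bigvee\{p_1 \mt p_2 \mid p_1,p_2\in K,\ p_1\le x,\ p_2\le y\}$, just use density: $x\mt y = \bigvee\{p\in K \mid p\le x\mt y\}$, and every such $p$ satisfies $p\le x$ and $p\le y$, so $p = p\mt p$ is itself one of the joinands. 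No distribution of meet over (directed) joins is needed. Your contradiction argument, by contrast, asks for $a_1,a_2\in L$ with $a_1\ge x$ and $a_2\ge y$; for an arbitrary $x\in L^\si$ no such element of $L$ need exist, and compactness (which relates closed to open elements) does not supply one. For $\mt^\pi$ the analogous lower bound comes for free from Lemma~\ref{sigma-pi-ext} ($\mt^\si\le\mt^\pi$), while the upper bound on open tuples is immediate since each joinand $a_1\mt a_2$ with $a_i\le q_i$ lies below both $q_1$ and $q_2$.

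The genuine gap is in the distributivity and Boolean claims, where you propose to ``check on closed elements, then extend by density.'' Extending an identity from $K$ to all of $L^\si$ is not automatic: writing each variable as a join of closed elements only helps if the operations commute with the joins so introduced, and that is exactly the point at issue (and it fails for arbitrary complete lattices). The missing ingredient is Lemma~\ref{operators}: if $\m{L}$ is distributive, then $\mt\colon L^2\to L$ is an operator, so by Lemma~\ref{operators}(a) its $\si$-extension --- which by the first part of the lemma \emph{is} the meet of $\m{L}^\si$ --- preserves arbitrary non-empty joins in each coordinate; in particular binary joins, so $\m{L}^\si$ is distributive. The same continuity facts (Lemma~\ref{operators} and its dual applied to $\neg$, viewed as a map from $B^\partial$ to $B$, where it is both an operator and a dual operator) are what allow the complement identities $x\mt\neg^\si x = 0$ and $x\jn\neg^\si x = 1$ to be pushed from closed elements to all of $B^\si$; without invoking them, the ``extend by density'' step in your sketch is unsupported.
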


\begin{Corollary}\label{clone}
For any class $\C$ of semilattice-ordered algebras, the set of  $\C$-expanding terms is a clone.
\end{Corollary}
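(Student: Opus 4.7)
The plan is to verify the two defining properties of a clone: that the projection terms are $\C$-expanding, and that composition preserves the property of being $\C$-expanding.

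For the projections, I would simply note that the $i$-th projection $\pi_i$ is an isotone term, and hence $\C$-expanding by the observation (stated in the paragraph following Lemma~\ref{meet-is-meet}, as a consequence of Lemma~\ref{isotone}(a)) that all isotone terms are $\C$-expanding. Alternatively one may compute directly, using that canonical extensions of posets commute with finite products (Lemma~\ref{HSPfin-commut}), that $(\pi_i^{\m A})^\si = \pi_i^{\m{A}^\si}$, making projections even stable.

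For composition, let $t(x_1,\dots,x_n)$ and $s_1,\dots,s_n$ be $\C$-expanding, set $f = t(s_1,\dots,s_n)$, and fix $\m{A} \in \C$. The goal is to show $(f^{\m A})^\si \le f^{\m{A}^\si}$, and I would chain together three inequalities. First, since all terms involved are isotone under the standing convention, Lemma~\ref{isotone}(a) applied to $t^{\m A}$ and the $s_i^{\m A}$ gives
\[
(f^{\m A})^\si \;=\; \bigl(t^{\m A}(s_1^{\m A},\dots,s_n^{\m A})\bigr)^\si \;\le\; (t^{\m A})^\si\bigl((s_1^{\m A})^\si,\dots,(s_n^{\m A})^\si\bigr).
\]
Second, the map $(t^{\m A})^\si$ is order-preserving in each coordinate (by isotonicity of $t^{\m A}$ together with Lemma~\ref{ord-pres-maps}), so the expanding hypothesis $(s_i^{\m A})^\si \le s_i^{\m{A}^\si}$ for each $i$ allows me to pass to $(t^{\m A})^\si(s_1^{\m{A}^\si},\dots,s_n^{\m{A}^\si})$. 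Third, the expanding hypothesis on $t$ gives the pointwise inequality $(t^{\m A})^\si \le t^{\m{A}^\si}$, which in particular yields
\[
(t^{\m A})^\si\bigl(s_1^{\m{A}^\si},\dots,s_n^{\m{A}^\si}\bigr) \;\le\; t^{\m{A}^\si}\bigl(s_1^{\m{A}^\si},\dots,s_n^{\m{A}^\si}\bigr) \;=\; f^{\m{A}^\si}.
\]
Concatenating the three bounds delivers $(f^{\m A})^\si \le f^{\m{A}^\si}$, so $f$ is $\C$-expanding.

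The only real subtlety, and the one I would emphasize in the writeup, is the distinction between $(t^{\m A})^\si$ (the $\sigma$-extension of the interpreted \emph{operation} on $\m A$) and $t^{\m{A}^\si}$ (the interpretation of the \emph{term} $t$ in $\m{A}^\si$, obtained by composing the canonical extensions designated for the basic operations of $\C$). These two maps need not coincide, and the whole content of ``$t$ is $\C$-expanding'' is precisely a pointwise inequality between them; the proof above is just a bookkeeping exercise that uses isotonicity to propagate such inequalities through composition. No further ingredients beyond Lemmas~\ref{ord-pres-maps} and~\ref{isotone} are needed.
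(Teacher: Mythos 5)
Your argument is correct and coincides with the proof the paper leaves implicit: Corollary~\ref{clone} is stated without proof as a consequence of Lemma~\ref{isotone}(a) (together with the preceding remark that isotone terms are expanding and the fact that projections are stable by Lemma~\ref{HSPfin-commut}), and your three-step chain --- Lemma~\ref{isotone}(a), coordinatewise monotonicity of $(t^{\m{A}})^\si$, and the pointwise inequality $(t^{\m{A}})^\si \le t^{\m{A}^\si}$ --- is exactly how that consequence is obtained. The one caveat, which you inherit from the paper rather than introduce, is that the appeal to Lemma~\ref{isotone}(a) presupposes that $t$ and the $s_i$ are isotone (up to the paper's convention of dualising coordinates), so strictly speaking the argument establishes closure under composition only for isotone expanding terms; this covers every term the paper actually uses, but is worth flagging since the corollary is phrased for all $\C$-expanding terms.
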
  

Let us also recall some basic facts about operators. A map $f\colon  P^n \to P$ is an \emph{operator} if it preserves existing finite joins in each coordinate. A map $g$ is a \emph{dual operator} if it preserves existing finite meets in each coordinate. Recall that these definitions implicitly incorporate appropriate dualisations of coordinates. In particular,  the implication of a Heyting algebra $\m{A}$ is a dual operator when considered as a map from $A^\partial\times A$ to $A$. Operators are also called \emph{additive operators}, and dual operators, \emph{multiplicative operators}, in particular, in the context of Boolean algebras with operators.

\begin{Lemma}\label{operators}
Let $\m{P}$ be a poset and let $f\colon P^n\to P$ be an operator and $g_1,\dots g_n\colon P^k\to P$ isotone maps. Assume that the dualisations of the coordinates that make $f$ an operator agree with those that make $g_1,\dots g_n$ order-preserving. Then
\begin{newlist}
\item[\rm (a)] $f^\si$ preserves arbitrary non-empty joins in each coordinate;
\item[\rm (b)] $f^\si$ preserves upward directed joins;
\item[\rm (c)] $(f(g_1,\dots, g_n))^\si = f^\si(g_1^\si,\dots g_n^\si)$. 
\end{newlist}
The dual statements hold for dual operators. 
\end{Lemma}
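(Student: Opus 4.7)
I would prove (a), (b), (c) in that order, with (a) doing most of the work. Since $f$ preserves existing finite non-empty joins coordinatewise and the dualisations of the coordinates of $f$ match those of the $g_i$, it suffices for (a) and (b) to verify the distributivity one coordinate at a time; I will present the argument as if $f\colon P\to P$ is a unary operator, the multivariate case being notationally identical.

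For (a), monotonicity of $f^\si$ gives the easy inequality $\bigvee_{x\in X}f^\si(x)\le f^\si(\bigvee X)$. For the reverse, by Lemma~\ref{ord-pres-maps}(c) it suffices to show that $f^\si(p)\le\bigvee_{x\in X}f^\si(x)$ for every closed $p\le\bigvee X$. By Lemma~\ref{ord-pres-maps}(a), $f^\si(p)=\bigwedge\{f(a)\mid a\in P,\ p\le a\}$ is itself closed, so density lets me test the desired inequality against opens $q\ge\bigvee_{x\in X}f^\si(x)$ and reduce to proving $f^\si(p)\le q$. Writing $p=\bigwedge Y$ and $q=\bigvee Z$ with $Y,Z\subseteq P$, unfolding each $f^\si(x)\le q$ via Lemma~\ref{ord-pres-maps}(c) as $f^\si(p'')\le q$ for every closed $p''\le x$, and invoking compactness to extract finite $P$-witnesses, the main obstacle is to combine these witnesses using the information $p\le\bigvee X$ into a single finite comparison $\bigwedge_{a\in Y_0}f(a)\le\bigvee_{z\in Z_0}z$; here the operator property of $f$ is precisely what is needed to merge and collapse the finite joins that arise, and a further application of compactness yields $f^\si(p)\le q$. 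I expect this compactness-plus-operator bookkeeping to be the main technical step.

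Part (b) is then immediate from (a), since every upward-directed subset of $P^\si$ is non-empty. For (c), Lemma~\ref{isotone}(a) already supplies the inequality $(f(g_1,\dots,g_n))^\si\le f^\si(g_1^\si,\dots,g_n^\si)$. For the reverse, fixing $x\in(P^\si)^k$ and applying Lemma~\ref{ord-pres-maps}(c) to each $g_i^\si$ together with part~(a) applied coordinatewise to $f^\si$, I would write $f^\si(g_1^\si(x),\dots,g_n^\si(x))$ as a join, indexed by tuples $(p^{(1)},\dots,p^{(n)})$ of closed elements of $(P^k)^\si$ below $x$, of expressions $f^\si(g_1^\si(p^{(1)}),\dots,g_n^\si(p^{(n)}))$. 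Each such expression, by the $n$-ary form of Lemma~\ref{ord-pres-maps}(a) applied to $f^\si$ on the closed tuple $(g_1^\si(p^{(1)}),\dots,g_n^\si(p^{(n)}))$ and then to each $g_i^\si(p^{(i)})$, unfolds into a meet of values of $f(g_1,\dots,g_n)$ at $P^k$-elements above the $p^{(i)}$'s; the alignment of these $P$-witnesses across coordinates --- again a compactness argument exploiting the operator property of $f$ --- bounds the meet above by $(f(g_1,\dots,g_n))^\si(p^{(i)})\le(f(g_1,\dots,g_n))^\si(x)$ for a suitable index $i$, yielding the claim. The order-dual arguments handle dual operators.
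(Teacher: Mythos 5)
The paper gives no proof of this lemma: it is quoted, as part of the background review, from Gehrke and Harding~\cite{GH01}, so your proposal can only be measured against the standard argument there. Your overall architecture for (a) is the right one (reduce via density and Lemma~\ref{ord-pres-maps} to a closed-below-open inequality, then use compactness and the operator property), and (b) and (c) do follow essentially as you say once (a) is in place. The problem is that the one step you defer --- ``combine these witnesses using the information $p\le\bigvee X$ into a single finite comparison'' --- is exactly where a literal reading of your sketch breaks down. Compactness, as defined here, compares a meet of \emph{closed} elements with a join of \emph{open} elements; it cannot be applied to $p\le\bigvee X$, because the elements of $X$ are arbitrary elements of $P^\si$ and $\bigvee X$ is in general neither open nor a join of opens. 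There is no direct way to ``finitize'' that hypothesis, and your sketch does not say how the obstacle is overcome.

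The missing idea is an inversion in the order of the argument. Fix an open $q\ge\bigvee_{x\in X}f^\si(x)$ as you do. For each closed $p''\le x$ with $x\in X$, the inequality $f^\si(p'')\le q$ \emph{is} closed-below-open, so compactness (together with isotonicity of $f$ and a finite meet taken in $P$) yields a single element $a_{p''}\in P$ with $p''\le a_{p''}$ and $f(a_{p''})\le q$. Only now does one invoke $p\le\bigvee X$: since $\bigvee X=\bigvee\{p''\in K\mid p''\le x\text{ for some }x\in X\}\le\bigvee_{p''}a_{p''}$, and the right-hand side is a join of elements of $P$, hence open, compactness applies at last and gives $p\le a_{p''_1}\vee\dots\vee a_{p''_m}$ for finitely many indices. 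Then $f^\si(p)\le f(a_{p''_1}\vee\dots\vee a_{p''_m})=f(a_{p''_1})\vee\dots\vee f(a_{p''_m})\le q$, the middle equality being the operator property --- this is the one place it is genuinely used, and your sketch never places it. Two smaller points: in a bare poset the finite joins and meets of the $a_{p''}$ need not exist in $P$, which is why \cite{GH01} proves this for lattices (harmless for the applications in this paper); and in (c) the ``alignment across coordinates'' needs no further compactness --- replace the tuple $(p^{(1)},\dots,p^{(n)})$ by the single closed element $p^{(1)}\vee\dots\vee p^{(n)}\le x$, after which the inequality $f^\si(g_1^\si(p),\dots,g_n^\si(p))\le(f(g_1,\dots,g_n))^\si(p)$ at a closed $p$ falls out of Lemma~\ref{ord-pres-maps}(a) by comparing index sets of the two meets.
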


We end this section with two fixpoint lemmas that will be crucial for the applications in Section~\ref{s:casestudies}. 

\begin{Lemma}\label{fixpoint}
Let $\m{P}$ be a poset and let $f\colon P\to P$ be an order-preserving map. If $X\subseteq P$ is downward directed and closed under $f$, and $f$ is decreasing on $X$, then  $f^\si(\bigwedge X) = \bigwedge X$ in $\m{P}^\si$.
\end{Lemma}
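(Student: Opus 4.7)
The plan is to set $p := \bigwedge X$ (a closed element of $\m{P}^\sigma$, since it is a meet of elements of $\m{P}$) and to establish the two inequalities $f^\sigma(p) \le p$ and $p \le f^\sigma(p)$ separately. Since $f$ is order-preserving, Lemma~\ref{ord-pres-maps}(a) gives the clean description
\[
f^\sigma(p) \;=\; \bigwedge \{f(a) \mid a \in P,\ p \le a\},
\]
which will be the workhorse of the argument.

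For the inequality $f^\sigma(p) \le p$, I would simply note that every $x \in X$ lies above $p$ in $\m{P}$, so $x$ appears among the $a$'s indexing the meet above. Hence $f^\sigma(p) \le f(x)$ for every $x \in X$, and therefore $f^\sigma(p) \le \bigwedge_{x \in X} f(x)$. Because $f$ is decreasing on $X$, we have $f(x) \le x$ for each $x \in X$, whence $\bigwedge_{x \in X} f(x) \le \bigwedge_{x \in X} x = p$. Closure of $X$ under $f$ is not needed for this direction, but it does give the complementary bound $\bigwedge_{x \in X} f(x) \ge p$, so in fact $\bigwedge_{x \in X} f(x) = p$.

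For the reverse inequality $p \le f^\sigma(p)$, it suffices to show that $p \le f(a)$ for every $a \in P$ with $p \le a$. This is where compactness of the canonical extension enters. Fix such an $a$; since $a \in P$, it is both closed and open in $\m{P}^\sigma$. From $\bigwedge X \le a$, compactness gives a finite $X_0 \subseteq X$ with $\bigwedge X_0 \le a$, and downward directedness of $X$ supplies some $x_0 \in X$ with $x_0 \le \bigwedge X_0 \le a$. Applying $f$, which is order-preserving on $\m{P}$, yields $f(x_0) \le f(a)$; but $f(x_0) \in X$ by closure of $X$ under $f$, so $p = \bigwedge X \le f(x_0) \le f(a)$, as required. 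Taking the meet over all such $a$ gives $p \le f^\sigma(p)$.

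The only subtle step is the middle paragraph's application of compactness: one has to notice that a single $a \in P$ is simultaneously closed and open and use downward directedness to replace a finite sub-meet by an element of $X$ itself, thereby reducing the comparison $\bigwedge X \le f(a)$ to an inequality inside $\m{P}$. Beyond that, the argument is a direct unpacking of Lemma~\ref{ord-pres-maps}(a) together with the hypotheses that $f$ is decreasing on $X$ and that $X$ is closed under $f$.
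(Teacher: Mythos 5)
Your proof is correct and follows essentially the same route as the paper's: both rest on Lemma~\ref{ord-pres-maps}(a) together with compactness and downward directedness to replace each $a\geq\bigwedge X$ by some $x_a\in X$ below it, then use closure under $f$ and decreasingness on $X$. The only difference is presentational: you split the argument into two inequalities, whereas the paper first identifies $\bigwedge\{f(a)\mid a\in P,\ \bigwedge X\leq a\}$ with $\bigwedge\{f(x)\mid x\in X\}$ and then compares the latter with $\bigwedge X$.
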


\begin{proof}
Let $y = \bigwedge X$ in $\m{P}^\si$. Since $X$ is closed under $f$, we have $f(x) \geq y$ for each $x\in X$.  As $y\in K$, by Lemma~\ref{ord-pres-maps}(a), we get $f^\si(y) = \bigwedge\{f(a) \mid a\in P,\ y\leq a\}$.  By compactness, for each $a\in P$ with $a\geq y$, there exists a finite $X_a\subseteq X$ with $a\geq \bigwedge X_a$, and since $X$ is downward directed, there is an element $x_a\in X$ such that $a\geq x_a$. So for every $a\in P$ with $a\geq y$, we have $a\geq x_a\geq y$ for some $x_a\in X$. Hence $\bigwedge\{f(a) \mid a\in P,\ y\leq a\} = \bigwedge\{f(x) \mid x\in X\}$. Now $X$ is closed under $f$, so $\bigwedge\{f(x) \mid x\in X\}\geq \bigwedge X$, but on the other hand, $f$ is decreasing on $X$, so $\bigwedge X \geq \bigwedge\{f(x) \mid x\in X\}$. Hence $f^\si(\bigwedge X) = \bigwedge X$, as claimed.
\end{proof}  

\begin{Lemma}\label{exp-fix}
Let $\m{A}$ be a semilattice-ordered algebra, and $t$ a unary term that is $\m{A}$-isotone and $\m{A}$-expanding. If $X\subseteq A$ is downward directed and closed under $t^\m{A}$, and $t^\m{A}$ is decreasing on $X$, then $t^{\m{A}^\si}(\bigwedge X) = \bigwedge X$  in $\m{A}^\si$. 
\end{Lemma}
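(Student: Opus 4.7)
The plan is to derive Lemma~\ref{exp-fix} essentially as a corollary of Lemma~\ref{fixpoint} combined with the $\m{A}$-expanding assumption. First, since $t$ is $\m{A}$-isotone, the map $f = t^\m{A} \colon A \to A$ is order-preserving, and the hypotheses on $X \subseteq A$ (downward directed, closed under $f$, and $f$ decreasing on $X$) are exactly those needed to invoke Lemma~\ref{fixpoint}. This yields $(t^\m{A})^\si(\bigwedge X) = \bigwedge X$ in $\m{A}^\si$. The defining inequality of the expanding property, $t^{\m{A}^\si} \geq (t^\m{A})^\si$, then immediately gives
\[
t^{\m{A}^\si}(\bigwedge X) \geq (t^\m{A})^\si(\bigwedge X) = \bigwedge X,
\]
which is one half of the desired equality.

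For the reverse inequality $t^{\m{A}^\si}(\bigwedge X) \leq \bigwedge X$, I would argue as follows. The map $t^{\m{A}^\si}$ extends $t^\m{A}$ on $A$, because each basic operation is extended by its canonical extension on elements of $A$ by Lemma~\ref{sigma-pi-ext}, and these extensions compose coordinatewise to extend the term $t$. Moreover, $t^{\m{A}^\si}$ is isotone on $\m{A}^\si$, because it is a composition of canonical extensions of basic operations and the $\si$ and $\pi$ extensions of an order-preserving map are again order-preserving by Lemma~\ref{ord-pres-maps}, so the overall monotonicity profile of $t$ lifts to $\m{A}^\si$. Granting this, for any $x \in X$ the inequality $\bigwedge X \leq x$ in $\m{A}^\si$ combines with isotonicity and the decreasing assumption to give
\[
t^{\m{A}^\si}(\bigwedge X) \leq t^{\m{A}^\si}(x) = t^\m{A}(x) \leq x,
\]
and taking the meet over $x \in X$ yields $t^{\m{A}^\si}(\bigwedge X) \leq \bigwedge X$.

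The main obstacle I foresee is justifying the jump from isotonicity of $t^\m{A}$ on $A$ to isotonicity of $t^{\m{A}^\si}$ on the full canonical extension. This is not automatic from the purely semantic assertion ``$\m{A}$-isotone'', but rests on the syntactic observation that $t$ is built from basic operations of the signature whose canonical extensions are order-preserving in each (appropriately dualised) coordinate, so their composition $t^{\m{A}^\si}$ inherits this property. Once this point is in place, everything else is a short book-keeping exercise linking Lemma~\ref{fixpoint} with the inequality supplied by $\m{A}$-expandingness.
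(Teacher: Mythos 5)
Your proposal is correct and follows essentially the same route as the paper's own proof: apply Lemma~\ref{fixpoint}, use the expanding inequality for $t^{\m{A}^\si}(\bigwedge X) \geq \bigwedge X$, and obtain the reverse inequality from the fact that $t^{\m{A}^\si}$ agrees with $t^\m{A}$ on $X\subseteq A$ together with isotonicity. The only difference is that you spell out why isotonicity of $t$ lifts to $\m{A}^\si$, a point the paper passes over with the bare phrase ``since $t$ is isotone''; your justification via the coordinatewise order-preservation of the canonical extensions of the basic operations is exactly the right one.
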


\begin{proof}
By Lemma~\ref{fixpoint}, we have $(t^\m{A})^\si(\bigwedge X) = \bigwedge X$. Since $t$ is expanding, $t^{\m{A}^\si}(\bigwedge X) \geq \bigwedge X$. But $t^{\m{A}^\si}(x) = t^{\m{A}}(x)$ for every $x\in X$, as $X\subseteq A$. So $t^{\m{A}^\si}(x)\leq x$ holds for all $x\in X$, and $t^{\m{A}^\si}(\bigwedge X) \leq \bigwedge X$, since $t$ is isotone. This establishes the desired equality.
\end{proof}


\section{Case Studies}\label{s:casestudies}

Theorem~\ref{t:general} can be roughly restated in the following form: if a coherent variety $\V$ of semilattice-ordered algebras is closed under (some) completions,  then every monotonic unary operation term-definable in $\V$ is either $n$-potent for some $n \in \N$ or non-continuous (fails to preserve meets of powers). The varieties of semilattice-ordered algebras mentioned in Example~\ref{e:typical} all fit this pattern. All  unary operations term-definable in a locally finite variety are $n$-potent for some $n \in \N$, and the same holds for all term-definable order-preserving decreasing (or increasing) unary operations of Heyting algebras. The varieties of lattice-ordered abelian groups and MV-algebras are coherent and have monotonic operations that are not $n$-potent for any $n \in \N$, but are not closed under completions. 

In this section, we use Theorem~\ref{t:general} to establish the failure of coherence and hence uniform deductive interpolation for various varieties of semilattice-ordered algebras that are closed under canonical extensions. These case studies are all corollaries of the following result.

\begin{Theorem}\label{t:basic}
Let $\V$ be a coherent variety of semilattice-ordered algebras that is closed
under canonical extensions. Then any unary $\V$-expanding term $t$ that is
order-preserving and decreasing in $\V$ is  $n$-potent for some $n \in \N$.   
\end{Theorem}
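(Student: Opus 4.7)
The plan is to deduce Theorem~\ref{t:basic} directly from Theorem~\ref{t:general} by using the canonical extension $\m{A}^\si$ as the enveloping algebra $\m{B}$ in the fixpoint embedding condition (FE). Since $\V$ is a variety of semilattice-ordered algebras by assumption, the meet-semilattice reduct hypothesis of Theorem~\ref{t:general} is automatic, and the hypotheses on $t(x)$ ($t(x)\le x$ and $t$ order-preserving) are exactly our ``decreasing'' and ``order-preserving'' assumptions. Thus, the only thing to verify in order to apply Theorem~\ref{t:general} is condition (FE).

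So fix a finitely generated $\m{A}\in\V$ and an element $a\in A$. I would take $\m{B} = \m{A}^\si$. By assumption, $\V$ is closed under canonical extensions, so $\m{A}^\si \in \V$. To see that $\m{A}$ is indeed a subalgebra of $\m{A}^\si$, one invokes Lemma~\ref{sigma-pi-ext}: each basic operation $f^\si$ extends $f$ on $\m{A}$, and since a composition of maps that agree with the original operations on $A$ still agrees with the original composition on $A$, the inclusion $A\hookrightarrow A^\si$ is an embedding with respect to the whole signature.

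Now consider the set $X = \{t^k(a) \mid k\in\N\} \subseteq A$. Because $t$ is decreasing on $\V$, the set $X$ is a descending chain, hence downward directed, and $t^{\m{A}}$ is decreasing on $X$; it is also closed under $t^{\m{A}}$ by construction. Since $t$ is $\V$-expanding and $\V$-isotone, Lemma~\ref{exp-fix} applies and yields
\[
t^{\m{A}^\si}\bigl(\bigwedge X\bigr) = \bigwedge X
\]
with the meet taken in $\m{A}^\si$. This is precisely the requirement of (FE) with $\m{B} = \m{A}^\si$, so Theorem~\ref{t:general} delivers an $n\in\N$ with $\V\models t^{n+1}(x) \eq t^n(x)$, as required.

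The only step with any real content is the verification that $t^{\m{A}^\si}$ fixes $\bigwedge X$, and that has been packaged into Lemma~\ref{exp-fix}; everything else is bookkeeping. The most delicate conceptual point, and the one I would take care to spell out, is that the hypothesis ``$\V$-expanding'' is used essentially here: without it, there is no reason for $t^{\m{A}^\si}(\bigwedge X)$ to lie above $\bigwedge X = (t^{\m{A}})^\si(\bigwedge X)$, and Lemma~\ref{exp-fix} would fail. In applications (Section~\ref{s:casestudies}) one therefore needs criteria ensuring expansion of the relevant term, which is why Corollary~\ref{clone} and the observation that isotone terms built from operators and stable operations are expanding will be the workhorses.
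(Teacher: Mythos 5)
Your proposal is correct and follows essentially the same route as the paper: verify (FE) by taking $\m{B}=\m{A}^\si$ (which lies in $\V$ by closure under canonical extensions), observe that $X=\{t^k(a)\mid k\in\N\}$ is a descending chain closed under $t$ on which $t$ is decreasing, and invoke Lemma~\ref{exp-fix} to obtain the fixpoint, after which Theorem~\ref{t:general} finishes the argument. Your additional remarks on where the $\V$-expanding hypothesis enters are accurate and match the paper's use of the lemma.
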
   

\begin{proof}
Let $t$ be any unary $\V$-expanding term $t$ that is
order-preserving and decreasing in $\V$.
By Theorem~\ref{t:general}, it suffices to show that $\V$ satisfies the
fixpoint embedding condition (FE) with respect to $t$. Let $\m{A}$ be an algebra
in $\V$ and let $a\in A$. Then $\m{A}$ embeds into its canonical extension
$\m{A}^{\!\si}$, and by closure under canonical extensions we have
$\m{A}^{\!\si}\in\V$. We identify $\m{A}$ with its isomorphic copy in
$\m{A}^{\!\si}$, and let $X = \{t^k(a) \mid k\in\N\}$. Since $\m{A}^{\!\si}$ is complete,
$\bigwedge X$ exists  in $\m{A}^{\!\si}$.\footnote{Note that $\bigwedge X$ may
also exist in $\m{A}$, and then
$\bigwedge^{\m{A}^{\!\si}} X\geq \bigwedge^{\m{A}} X$. In general,
$t(\bigwedge^{\m{A}^{\!\si}} X) = \bigwedge^{\m{A}^{\!\si}} X$ does not imply
$t(\bigwedge^{\m{A}} X) = \bigwedge^{\m{A}} X$, so the required fixpoint
for $t$ may always ``escape'' to the canonical extension.}     
By assumption, $t$ is 
$\m{A}$-expanding and $\m{A}$-isotone, so Lemma~\ref{exp-fix} applies, 
yielding $t^{\m{A}^\si}(\bigwedge X) = \bigwedge X$. This shows that
(FE) holds, as required.
\end{proof}


\subsection{Varieties of Boolean algebras with operators}

A {\em Boolean algebra with operators} is an algebra $\m{A} = \lan A, \mt,\jn,\neg,0,1, \mathcal{O}\ran$ such that $\lan A, \mt,\jn,\neg,0,1\ran$ is a Boolean algebra, and $\mathcal{O}$ is a set of (multiplicative) operators. In this section, we will refer to $\mathcal{O}$ as the \emph{signature} of $\m{A}$, assuming tacitly that the Boolean operations are always present.  We will also refer to operations in $\mathcal{O}$ simply as operators. (This is terminologically at odds with previous sections, but follows standard practice.)

Let $\mathrm{Clo_{ex}}(\mathcal{O})$ be the clone of all expanding terms over the signature $\mathcal{O}$.
For $f\in\mathcal{O}$, define $\boxdot^f_k x = f(\overline{0},x,\overline{0})\mt x$, with $x$ occurring only at coordinate $k$, and $\boxdot^f x = \bigwedge_{0<i\leq k} \boxdot^f_i x$. If $\mathcal{O}$ is finite, then we also let $\boxdot x = \bigwedge_{f\in \mathcal{O}} \boxdot^f x$. All these terms interpret to operators, so they belong to $\mathrm{Clo_{ex}}(\mathcal{O})$ by Lemma~\ref{operators}. 

It is known that a variety $\V$ of Boolean algebras with operators of finite signature $\mathcal{O}$ admits \emph{equationally definable principal congruences} (EDPC) if and only if $\boxdot$ is $n$-potent for some $n \in \N$  (see, e.g.,~\cite{KK06}). We will now show that for any variety $\V$ of Boolean algebras with operators, coherence implies EDPC for all varieties inheriting finitely many operators from $\V$, even if $\V$ itself is of infinite signature.

\begin{Theorem}\label{t:BAOs} 
Let $\V$ be a variety of Boolean algebras with operators of signature $\mathcal{O}$ that is closed under canonical extensions. Let $\mathcal{O}'$ be a finite subset of $\mathrm{Clo_{ex}}(\mathcal{O})$, and let $\V'$ be the variety generated by term-reducts of members of $\V$ in the signature $\mathcal{O}'$. If $\V$ is coherent, then  $\V'$ has the EDPC. 
\end{Theorem}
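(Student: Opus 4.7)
The plan is to apply Theorem~\ref{t:basic} to $\V$ with a carefully chosen unary term and then transfer the resulting $n$-potency down to $\V'$. Let $t(x)$ denote the unary term in the signature $\mathcal{O}$ (together with the Boolean operations) obtained by unfolding
\[
\boxdot(x) \;:=\; \bigwedge_{f \in \mathcal{O}'}\,\bigwedge_{0<k\le \mathrm{ar}(f)} \bigl(f(\overline{0},x,\overline{0}) \wedge x\bigr),
\]
that is, by replacing each $f\in\mathcal{O}'$ with its defining term in $\mathrm{Clo_{ex}}(\mathcal{O})$. By construction, for every $\m{A}\in\V$ the term operation $t^{\m{A}}$ coincides on the reduct $\m{A}|_{\mathcal{O}'}\in\V'$ with $\boxdot^{\m{A}|_{\mathcal{O}'}}$.

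I would then verify the three hypotheses of Theorem~\ref{t:basic} for $\V$ and $t$. The $\V$-expanding property is the central point and uses the hypothesis $\mathcal{O}' \subseteq \mathrm{Clo_{ex}}(\mathcal{O})$: each $f \in \mathcal{O}'$ is expanding by assumption; meet and the constant $0$ are $\V$-stable (hence expanding) by Lemma~\ref{meet-is-meet}; and $\mathrm{Clo_{ex}}(\mathcal{O})$ is a clone by Corollary~\ref{clone}, so the composition $t$ is again expanding. Order-preservation of $t$ follows because each $f \in \mathcal{O}'$ interprets as a multiplicative operator and is therefore monotone in each coordinate, and meet preserves monotonicity. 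Finally $t(x) \le x$ in $\V$, since every inner conjunct $f(\overline{0},x,\overline{0})\wedge x$ sits below $x$.

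Theorem~\ref{t:basic} then yields $n\in\N$ with $\V \models t^{n+1}(x) \eq t^{n}(x)$. Because $t^{\m{A}} = \boxdot^{\m{A}|_{\mathcal{O}'}}$ for every $\m{A}\in\V$, this equation, reinterpreted in the signature $\mathcal{O}'$ of $\V'$, holds in every $\mathcal{O}'$-term-reduct of a member of $\V$; and since $\V'$ is by definition the variety generated by such reducts, the equation $\boxdot^{n+1}(x) \eq \boxdot^{n}(x)$ holds throughout $\V'$. By the characterization of EDPC for varieties of BAOs with finite signature recalled just before the theorem, $\V'$ has EDPC.

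The main delicate point is the verification that the unfolded term $t$ really sits in $\mathrm{Clo_{ex}}(\mathcal{O})$, which is precisely where the clone closure of expanding terms is essential; without it one could not invoke Theorem~\ref{t:basic} in $\V$. Apart from this, the argument is essentially a direct application of Theorem~\ref{t:basic} to $\V$ itself, sidestepping any question of whether coherence and closure under canonical extensions are inherited by the (possibly smaller) variety $\V'$.
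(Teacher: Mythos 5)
Your argument is correct in its essentials, but it takes a genuinely different route from the paper's, and the difference is worth spelling out. The paper works with $\V'$ itself: it writes $\V=\op{IS}(\C)$ for $\C=\{\m{A}^{\si}\mid\m{A}\in\V\}$, observes that $\V'=\op{HS}(\C')$ where $\C'$ is the class of $\mathcal{O}'$-reducts of $\C$, deduces from Lemma~\ref{HSPfin-commut} that $\V'$ is closed under canonical extensions, and then applies Theorem~\ref{t:basic} to $\V'$ with the term $\boxdot$ built from the signature $\mathcal{O}'$. You instead unfold $\boxdot$ into a term $t$ of the original signature $\mathcal{O}$, apply Theorem~\ref{t:basic} to $\V$ itself, and only afterwards push the identity $t^{n+1}(x)\eq t^{n}(x)$ down to $\V'$ via the observation that identities valid in a generating class hold throughout the generated variety. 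Your route buys something real: Theorem~\ref{t:basic} requires coherence of the variety it is applied to, and coherence of $\V'$ is not obviously inherited from coherence of $\V$ (finitely presented algebras of $\V'$ need not arise as reducts of finitely presented algebras of $\V$), nor does closure of $\V'$ under canonical extensions follow painlessly, since $\V'=\op{HSP}$ of the reducts and Lemma~\ref{HSPfin-commut} only covers finite products. By staying inside $\V$, where both hypotheses are given, you sidestep these issues entirely, as you note at the end; the price is the verification that the unfolded $t$ is $\V$-expanding, which you handle correctly via Lemma~\ref{meet-is-meet} and Corollary~\ref{clone}.

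One justification is wrong as stated, although the underlying assumption is shared with the paper. You claim that $t$ is order-preserving because each $f\in\mathcal{O}'$ ``interprets as a multiplicative operator''. It need not: $\mathcal{O}'$ is an arbitrary finite subset of $\mathrm{Clo_{ex}}(\mathcal{O})$, the clone of \emph{expanding} terms, and an expanding term need not be an operator, nor even order-preserving (a term such as $\neg\Dia x$ is expanding, in the paper's dualised sense of isotone, but antitone). Order-preservation of $t$ is genuinely required by Theorem~\ref{t:basic}, since otherwise the set $\{t^k(a)\mid k\in\N\}$ need not be downward directed and Lemma~\ref{exp-fix} fails to apply. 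So this monotonicity should be recorded as a property of the members of $\mathcal{O}'$ (e.g., that they are isotone, as the motivating positive terms are) rather than derived from an operator property they do not have.
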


\begin{proof}
Let $\C = \{\m{A}^{\!\si} \mid \m{A}\in\V\}$, so that $\V =
\op{IS}(\C)$. Defining $\C'$ to be the class of $\mathcal{O}'$-reducts of  
$\C$, we have $\V' = \op{HS}(\C')$. The class $\C$ is closed under canonical
extensions (because $\V$ is), and hence so is $\C'$. Canonical
extensions commute with subalgebras 
and homomorphic images (see Lemma~\ref{HSPfin-commut}), so $\V'$ is also closed
under canonical extensions. The result then follows by applying
Theorem~\ref{t:basic} with  the term operation $\boxdot$ defined for the
signature $\mathcal{O}'$.  
\end{proof}

A term $t$ is called {\em positive} if every occurrence of a variable lies within the scope of an even number of occurrences of $\lnot$. Positive terms are expanding (cf.~\cite{Jon94}), so Theorem~\ref{t:BAOs} applies to any $\V$ defined by identities containing positive terms. In particular, it applies to varieties with \emph{conjugate} operators, or more broadly, to all varieties of finite signature, such that $\boxdot$ has a term-definable conjugate, that is, an operator $g$ satisfying
\[
\boxdot x \mt y = 0 \iff x\mt g(y) = 0.
\]
Such varieties are called \emph{cyclic}. For cyclic varieties, the EDPC is
equivalent to being a discriminator variety (see~\cite{KK06}, or~\cite{Kow98}
for the special case of tense algebras), so a stronger result can be stated.

\begin{Corollary}\label{cyclic}
Let $\V$ be a cyclic variety of Boolean algebras with operators that is closed under canonical extensions. If $\V$ is coherent, then  $\V$ is a discriminator variety. 
\end{Corollary}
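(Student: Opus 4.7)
The plan is to derive the corollary as a direct application of Theorem~\ref{t:BAOs}, combined with the equivalence between EDPC and the discriminator property for cyclic varieties noted in the paragraph preceding the statement. First, I would observe that since $\V$ is cyclic, the term $\boxdot x = \bigwedge_{f\in\mathcal{O}}\boxdot^f x$ must be well-defined as a finite meet, so the signature $\mathcal{O}$ of $\V$ is necessarily finite. Taking $\mathcal{O}' = \mathcal{O}$ itself, each basic operator in $\mathcal{O}$ is an operator in the sense of Lemma~\ref{operators}, hence the corresponding term belongs to $\mathrm{Clo_{ex}}(\mathcal{O})$, and $\mathcal{O}'$ is indeed a finite subset of $\mathrm{Clo_{ex}}(\mathcal{O})$.

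Next, I would note that with this choice of $\mathcal{O}'$ the variety $\V'$ generated by the $\mathcal{O}'$-term-reducts of members of $\V$ is simply $\V$ itself, as no reduction takes place. Applying Theorem~\ref{t:BAOs} to $\V$ and $\mathcal{O}'$ under the coherence hypothesis then yields that $\V' = \V$ has the EDPC. Finally, I would invoke the cited equivalence~\cite{KK06} between EDPC and being a discriminator variety in the cyclic setting to conclude that $\V$ is a discriminator variety. There is no real obstacle here; the statement is labelled a corollary precisely because the substantive work has already been carried out in Theorem~\ref{t:BAOs} and in the external characterisation of discriminator varieties among cyclic varieties of Boolean algebras with operators.
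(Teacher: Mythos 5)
Your proposal is correct and follows exactly the route the paper intends: take $\mathcal{O}'=\mathcal{O}$ (finite, since cyclicity as defined here presupposes a finite signature), apply Theorem~\ref{t:BAOs} to get EDPC for $\V'=\V$, and conclude via the equivalence of EDPC with the discriminator property for cyclic varieties from~\cite{KK06}. The only quibble is cosmetic: finiteness of $\mathcal{O}$ is part of the paper's definition of cyclic rather than something to be inferred from the well-definedness of $\boxdot$.
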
  

Theorem~\ref{t:BAOs} applies of course to varieties of modal algebras. Let $\cls{K}$ be the variety of all modal algebras, and call any subvariety of $\cls{K}$ satisfying the equation $\boxdot^{n+1}x \eq \boxdot^n x$ for some $n \in \N$ \emph{weakly-transitive}.   

\begin{Corollary}
Let\/ $\V$ be any subvariety of\/ $\cls{K}$ that is closed under canonical extensions and not weakly transitive. Then $\V$ is not coherent and does not admit uniform deductive interpolation, and its first-order theory does not have a model completion.
\end{Corollary}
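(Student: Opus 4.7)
The strategy is to derive a contradiction via Theorem~\ref{t:BAOs} applied to the signature $\mathcal{O} = \{\Box\}$ of modal algebras. Assume $\V \subseteq \cls{K}$ is coherent, closed under canonical extensions, and not weakly transitive. Let $\mathcal{O}' = \{\boxdot\}$, and note that $\boxdot$ is a (multiplicative) operator on every modal algebra and is order-preserving and decreasing. To invoke Theorem~\ref{t:BAOs} we must verify that $\boxdot \in \mathrm{Clo_{ex}}(\mathcal{O})$; this is the same observation made for the terms $\boxdot^f$ in the paragraph preceding Theorem~\ref{t:BAOs}, and it follows from Lemma~\ref{isotone}(a) combined with the stability of $\wedge$ (Lemma~\ref{meet-is-meet}) and $\Box^\sigma \leq \Box^\pi$ (Lemma~\ref{sigma-pi-ext}): for any $\m{A} \in \V$,
\[
(\boxdot^{\m{A}})^\si \;\leq\; \Box^\sigma(x) \wedge x \;\leq\; \Box^\pi(x) \wedge x \;=\; \boxdot^{\m{A}^\si}(x).
\]

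Theorem~\ref{t:BAOs} then asserts that the variety $\V'$ generated by the $\{\boxdot\}$-reducts of members of $\V$ has EDPC. By the characterization of EDPC for varieties of BAOs of finite signature quoted just before Theorem~\ref{t:BAOs}, this is equivalent to $\V' \models \boxdot^{n+1} x \eq \boxdot^{n} x$ for some $n \in \N$. Since every $\{\boxdot\}$-reduct of a $\V$-algebra lies in $\V'$, the same identity holds in $\V$, contradicting non-weak-transitivity. Hence $\V$ is not coherent.

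The remaining assertions are immediate consequences. Failure of uniform deductive interpolation follows from the Proposition at the end of Section~\ref{s:interpolation}, which states that right uniform deductive interpolation is equivalent to the conjunction of coherence and deductive interpolation. The non-existence of a model completion for the first-order theory of $\V$ follows from the result of Wheeler~\cite{Whe76} recalled in the Introduction: coherence is a necessary condition for the existence of a model completion. The only real friction in the argument is the $\sigma$/$\pi$ bookkeeping for the multiplicative operator $\Box$, which is handled cleanly by the isotone composition inequality displayed above.
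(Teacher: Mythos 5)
Your argument is correct and matches the paper's intended (unstated) derivation: the corollary is an immediate instance of Theorem~\ref{t:BAOs} together with the quoted EDPC characterization of $n$-potency of $\boxdot$, and your verification that $\boxdot$ is expanding via Lemmas~\ref{isotone}, \ref{meet-is-meet}, and \ref{sigma-pi-ext} is a sound alternative to the paper's appeal to Lemma~\ref{operators}. The only cosmetic simplification available is to take $\mathcal{O}'=\{\Box\}$, so that $\V'=\V$ and one avoids the (easy, but unstated in your write-up) check that the $\boxdot$-term computed for the signature $\{\boxdot\}$ unfolds to the original $\boxdot x = \Box x \mt x$.
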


In particular, neither of the varieties corresponding to the well-known modal logics $\lgc{K}$ and $\lgc{KT}$ are coherent, admit uniform deductive interpolation, or have a first-order theory that has a model completion. Note however, that there exist subvarieties of\/ $\cls{K}$, such as the varieties corresponding to $\lgc{K4}$ and $\lgc{S4}$, that are closed under canonical extensions and weakly transitive, and are also not coherent (cf.~\cite{GZ02}).


\subsection{Varieties of double-Heyting algebras}

A \emph{double-Heyting algebra} is an algebra $\m{A} = \lan A,\mt,\jn,\ra,-,0,1 \ran$ such that $\lan A,\mt,\jn,\ra,0,1\ran$ is a Heyting algebra and $\lan A, \jn,\mt,-,0,1 \ran$ is a dual Heyting algebra, with $-$ dually residuating $\jn$. We consider the unary term $d(x) = (1-x)\ra 0$. This term is decreasing in any double-Heyting algebra $\m{A}$, and $d(\bigwedge X) = \bigwedge\{d(x) \mid x\in X\}$ holds for any $X \subseteq A$ for which $\bigwedge X$ 
is defined, so it also preserves meets of powers.

\begin{Lemma}\label{dbl-H-canon}
The variety of double-Heyting algebras is closed under canonical extensions.
\end{Lemma}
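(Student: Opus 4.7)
The plan is to equip the canonical extension $\m{A}^\si$ of the bounded distributive lattice reduct of a double-Heyting algebra $\m{A}$ with the operations $\ra^\pi$ and $-^\si$, and verify that the result is again a double-Heyting algebra. By Lemma~\ref{meet-is-meet} this reduct is already a bounded distributive lattice with the correct bounds, so only the two residuating operations need attention; they are handled by dual applications of the canonical-extension theory of (dual) operators reviewed in Section~\ref{s:canonical}.

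For the Heyting component, I would first observe that $\ra\colon A^\partial\times A \to A$ is a dual operator: in any Heyting algebra $1\ra y = y$, $(x\jn y)\ra z = (x\ra z)\mt(y\ra z)$, $x\ra 1 = 1$, and $x\ra (y\mt z) = (x\ra y)\mt(x\ra z)$. The canonical-extension theory for dual operators then yields
\[
x\mt y \le z \iff y \le x\ra^\pi z \quad \text{for all } x,y,z\in A^\si,
\]
which is an instance of the residuation equivalences displayed in Section~\ref{s:canonical} just before Lemma~\ref{HSPfin-commut}. Hence $\lan A^\si,\mt,\jn,\ra^\pi,0,1\ran$ is a Heyting algebra. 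Dually, $-\colon A\times A^\partial \to A$ is an operator, since $(x\jn y)-z = (x-z)\jn(y-z)$, $0-z = 0$, $x-(y\mt z) = (x-y)\jn(x-z)$, and $x-1=0$. By the order dual of the previous step, $-^\si$ provides the dual residual of $\jn$ in $\m{A}^\si$: $x -^\si y \le z \iff x \le y\jn z$. Combining the two structures on the common lattice reduct yields a double-Heyting algebra $\m{A}^\si$, and by Lemma~\ref{sigma-pi-ext} the extended operations restrict to $\ra$ and $-$ on $A$, so $\m{A}$ embeds into $\m{A}^\si$ as a double-Heyting subalgebra.

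The main obstacle will be the rigorous verification of the two residuation equivalences, since they are not stated explicitly in Section~\ref{s:canonical}. Both follow by the standard density-and-compactness argument of~\cite{GH01}: $\ra^\pi$ is completely meet-preserving in each coordinate on $A^\si$ (the order dual of Lemma~\ref{operators}), and every element of $A^\si$ is both a join of closed elements and a meet of open elements, which reduces the residuation on $A^\si$ to the known residuation on $A$. An entirely dual argument, exchanging the roles of $\si$ and $\pi$ and of closed and open elements, handles $-^\si$ and completes the proof.
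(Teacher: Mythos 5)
Your proposal is correct and follows essentially the same route as the paper: take $\m{A}^\si = \lan A^\si,\jn^\si,\mt^\si,\ra^\pi,-^\si,0,1\ran$, use Lemma~\ref{meet-is-meet} for the distributive lattice reduct, invoke the known fact that $\ra^\pi$ residuates $\mt^\si$, and handle $-^\si$ by the dual argument based on $-$ being an operator from $A\times A^\partial$ to $A$. The extra detail you supply (the distributivity identities witnessing the (dual) operator properties and the density-and-compactness verification of residuation) merely fills in what the paper leaves as ``well known.''
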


\begin{proof}
Let $\m{A} = \lan A,\jn,\mt,\ra,-,0,1 \ran$ be a double-Heyting algebra, and let $\m{A}^\si = \lan A^\si,\jn^\si,\mt^\si,\ra^\pi,-^\si,0,1 \ran$. Then $\lan A^\si,\jn^\si,\mt^\si,0,1 \ran$ is a complete bounded distributive lattice. It is well known that $\ra^\pi$ residuates $\mt^\si$, so we only need to show that $-^\si$ dually residuates $\jn^\si$. This follows by duality and Lemma~\ref{meet-is-meet}, using the observation that $-$ is an operator when viewed as a map from $A\times A^\partial$ to $A$.
\end{proof}

It was shown in~\cite{Tay16} that a variety of double-Heyting algebras has the EDPC if and only if it is a discriminator variety, and that this situation occurs if and only if the term $d$ defined above is $n$-potent for some $n \in \N$. Hence, we obtain the following  analogue of Corollary~\ref{cyclic}.

\begin{Theorem}\label{t:dbl-H}
Let $\V$ be a variety of double-Heyting algebras that is closed under canonical extensions.  If $\V$ is coherent, then $\V$ is a discriminator variety.
\end{Theorem}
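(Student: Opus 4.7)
The plan is to apply Theorem~\ref{t:basic} to the unary term $d(x) = (1-x) \ra 0$ introduced just before Lemma~\ref{dbl-H-canon}, and then invoke the criterion from~\cite{Tay16} which says that, for a variety of double-Heyting algebras, $d$ being $n$-potent for some $n \in \N$ forces the variety to be a discriminator variety. Since $\V$ is assumed coherent and closed under canonical extensions, it remains only to verify that $d$ is order-preserving in $\V$, decreasing in $\V$, and $\V$-expanding.

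Order-preservation is immediate: $d$ is the composition of the two order-reversing maps $x \mapsto 1-x$ and $y \mapsto y \ra 0$. For decreasingness, I would exploit the defining property $x \jn (1-x) = 1$ of the dual residual together with the Heyting identity $((1-x) \ra 0) \mt (1-x) = 0$ and distributivity of the underlying lattice:
\[
d(x) = d(x) \mt (x \jn (1-x)) = (d(x) \mt x) \jn 0 \le x.
\]
For $\V$-expansiveness I would invoke Corollary~\ref{clone}: the class of $\V$-expanding terms is a clone, so it suffices to check that each basic operation is expanding. The operations $\mt$ and $\jn$ are stable by Lemma~\ref{meet-is-meet} and the constants are trivially stable, while for $\ra$ and $-$ --- interpreted in $\m{A}^\si$ as $\ra^\pi$ and $-^\si$ respectively, by Lemma~\ref{dbl-H-canon} --- expansiveness follows from the inequality $f^\si \le f^\pi$ of Lemma~\ref{sigma-pi-ext} (together with the trivial $f^\si \le f^\si$).

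With all three conditions verified, Theorem~\ref{t:basic} yields some $n \in \N$ such that $\V \models d^{n+1}(x) \eq d^n(x)$, and~\cite{Tay16} then delivers that $\V$ is a discriminator variety. I do not foresee a serious obstacle; the step demanding the most care is the decreasingness calculation, which is the only place where the Heyting and dual-Heyting structures interact nontrivially through distributivity.
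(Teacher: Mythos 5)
Your proposal is correct and follows essentially the same route as the paper: apply Theorem~\ref{t:basic} to the term $d(x)=(1-x)\ra 0$ and then invoke the criterion of~\cite{Tay16} relating $n$-potency of $d$ to the discriminator property. The paper delegates the hypotheses of Theorem~\ref{t:basic} to the observations preceding Lemma~\ref{dbl-H-canon}, whereas you verify them explicitly (order-preservation, the distributivity computation for $d(x)\le x$, and expansiveness via Corollary~\ref{clone} and Lemmas~\ref{sigma-pi-ext} and~\ref{meet-is-meet}); all of these checks are sound.
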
  

\begin{proof}
Let $\V$ be a coherent variety of double-Heyting algebras that is closed under canonical extensions. By the observations at the beginning of this subsection, $\V$ satisfies the assumptions of Theorem~\ref{t:basic} with $\boxdot$ as $d$. Hence $\V\models d^{n+1}(x) \eq d^n (x)$ for some $n \in \N$, and the claim follows.
\end{proof}  

Since the variety of all double-Heyting algebras is not a discriminator variety, Lemma~\ref{dbl-H-canon} combined with Theorem~\ref{t:dbl-H} yields the following result.

\begin{Corollary}
The variety of double-Heyting algebras is not coherent and does not admit uniform deductive interpolation, and its first-order theory does not have a model completion.
\end{Corollary}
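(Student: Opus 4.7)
The plan is to read off the corollary as an assembly of results already proved, with a single nontrivial input: the variety $\V$ of all double-Heyting algebras is not a discriminator variety. Once that is granted, the three conclusions (failure of coherence, failure of uniform deductive interpolation, non-existence of a model completion) follow by bookkeeping.

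First, Lemma~\ref{dbl-H-canon} tells us that $\V$ is closed under canonical extensions, which is one of the two hypotheses of Theorem~\ref{t:dbl-H}. Suppose for contradiction that $\V$ is coherent. Then Theorem~\ref{t:dbl-H} applies and forces $\V$ to be a discriminator variety. But by Taylor's characterisation from~\cite{Tay16} recalled at the start of the subsection, this is equivalent to the term $d(x)=(1-x)\to 0$ being $n$-potent in $\V$ for some $n\in\N$, which fails in $\V$ (a witness is implicit in~\cite{Tay16}; concretely, one can exhibit a double-Heyting algebra on which the descending chain $x \ge d(x) \ge d^2(x) \ge \cdots$ does not stabilise). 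Hence $\V$ is not coherent.

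Next, failure of coherence propagates to the remaining two conclusions. The Proposition at the end of Section~\ref{s:interpolation} states that right uniform deductive interpolation is equivalent to coherence together with deductive interpolation, so the failure of coherence immediately rules out uniform deductive interpolation for $\V$. For model completions, Wheeler's theorem~\cite{Whe76} recalled in the introduction tells us that the existence of a model completion for the first-order theory of a variety implies coherence; contrapositively, $\V$ cannot have a model completion.

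The main, indeed only, obstacle is confirming that $d$ is not $n$-potent in $\V$. Everything else is either cited verbatim or is a routine contrapositive; the real work is to point at a specific double-Heyting algebra that breaks $n$-potency of $d$. Since~\cite{Tay16} already supplies such an example, the proof can legitimately be presented as a one-line deduction, but a fully self-contained argument would need to reproduce, or at least pinpoint, the specific algebra that breaks $n$-potency.
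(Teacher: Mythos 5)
Your proposal is correct and follows essentially the same route as the paper: combine Lemma~\ref{dbl-H-canon} (closure under canonical extensions) with Theorem~\ref{t:dbl-H} and the fact that double-Heyting algebras do not form a discriminator variety, then propagate the failure of coherence to uniform deductive interpolation and model completions via the Proposition at the end of Section~\ref{s:interpolation} and Wheeler's theorem. The only difference is that you make explicit the bookkeeping the paper leaves implicit, and you correctly flag that the one external input (non-$n$-potency of $d$, equivalently non-discriminator, from~\cite{Tay16}) is cited rather than proved.
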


\noindent
This result may be somewhat surprising in view of the fact that Heyting algebras admit uniform deductive interpolation (cf.~\cite{GZ97}), and bi-intuitionistic logic has the Craig interpolation property (cf.~\cite{KO16}).


\subsection{Varieties of residuated lattices}

A {\em residuated lattice} is an algebra $\m{A} = \lan A,\mt,\jn,\cdot,\ld,\rd,\e \ran$ such that $\lan A,\mt,\jn
\ran$ is a lattice, $\lan A,\cdot,\e \ran$ is a monoid, and for all $a,b,c \in A$,  
\[
b \le a \ld c \iff a \cdot b \le c \iff a \le c \rd b.
\]
Residuated lattices expanded by a constant $0$ are known as $\FL$-algebras. To present results about $\FL$-algebras and residuated lattices in a uniform way, we will view residuated lattices as $\FL$-algebras satisfying
the identity $\e\eq 0$. We refer to~\cite{GJKO07} for further details regarding these structures and their role as algebraic semantics for substructural logics. 

As remarked already in Section~\ref{s:canonical}, the canonical extension of a residuated lattice must mix $\si$ and $\pi$ extensions of the basic operations. Hence, for a residuated lattice $\m{A}$, its canonical extension is defined to be the algebra $\m{A}^\si = \lan A^\si,\jn^\si,\mt^\si,\cdot^\si,\ld^\pi,\rd^\pi,\e^\si \ran$. Note that the multiplication operation is an operator, and the divisions (with appropriately dualised coordinates) are dual operators. When so defined, $\m{A}^\si$ is a residuated lattice, showing that the variety of residuated lattices is closed under canonical extensions. Many other important varieties of residuated lattices are also closed under canonical extensions as illustrated by the following two lemmas.

\begin{Lemma}\label{can-rls}
Let $\V$ be a variety of $\FL$-algebras defined \textup{(}relative to the variety of all $\FL$-algebras\textup{)} by any combination of the following identities:
\begin{newlist}
\item[$\bullet$] $\e\eq 0$ \textup{(}residuated lattices\textup{)};
\item[$\bullet$] $x\leq \e$ \textup{(}\emph{integral} $\FL$-algebras, or $\FL_o$-algebras\textup{)};
\item[$\bullet$] $0\leq x$ \textup{(}\emph{zero-bounded} $\FL$-algebras, or $\FL_i$-algebras\textup{)};
\item[$\bullet$] $xy \eq yx$ \textup{(}\emph{commutative} $\FL$-algebras, or $\FL_e$-algebras\textup{)};
\item[$\bullet$] $x \leq x^2$ \textup{(}\emph{square-increasing} $\FL$-algebras, or $\FL_c$-algebras\textup{)};
\item[$\bullet$] $0\rd (x\ld 0)\eq (0\rd x)\ld 0$ \textup{(}\emph{cyclic} $\FL$-algebras\textup{)};
\item[$\bullet$] $0\rd (x\ld 0)\eq x \eq (0\rd x)\ld 0$ \textup{(}\emph{involutive} $\FL$-algebras\textup{)};
\item[$\bullet$] $(\e\mt x)^ky \eq y(\e\mt x)^k$, for some $k\in\N$ \textup{(}\emph{Hamiltonian} $\FL$-algebras\textup{)};
\item[$\bullet$] $x\mt(y\jn z) \eq (x\mt y)\jn(x\mt z)$ \textup{(}\emph{distributive} $\FL$-algebras\textup{)}.
  \end{newlist} 
Then $\V$ is closed under canonical extensions. 
\end{Lemma}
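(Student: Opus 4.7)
The paragraph immediately preceding the lemma establishes that canonical extensions send FL-algebras to FL-algebras, so it suffices to show that each identity listed is preserved on passing from $\m{A}$ to $\m{A}^\si$ (\emph{canonical}, in the terminology of J{\'o}nsson). The plan is to dispatch the identities in three groups, using the machinery of Section~\ref{s:canonical}.

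The first group contains the constant, bounding, and distributivity identities. The identity $\e \eq 0$ is preserved trivially because the canonical embedding preserves the constants $\e$ and $0$. For integrality $x \leq \e$, the hypothesis says $\e$ is the top of $A$; by density every element of $A^\si$ is a meet of open elements, each of which is a join of elements of $A$ and hence bounded above by $\e$, so $\e$ remains the top. The argument for $0 \leq x$ is dual. Preservation of distributivity is immediate from Lemma~\ref{meet-is-meet}.

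The second group consists of the positive multiplicative identities: commutativity, square-increasing, and Hamiltonian. Each term appearing in these identities is built only from the variables and the operations $\cdot$, $\mt$, $\jn$, and $\e$. Since the lattice operations are stable (Lemma~\ref{meet-is-meet}) and $\cdot^\si$ preserves non-empty joins in each coordinate (Lemma~\ref{operators}(a)), the $\sigma$-extension of any composition of these operations agrees with the corresponding composition of $\sigma$-extensions (Lemma~\ref{operators}(c)). Combined with density, this reduces each identity on $\m{A}^\si$ to the corresponding identity already assumed on $\m{A}$; a typical instance is $x \le x^2$, which is first checked on closed elements using filteredness and then extended to all of $A^\si$ by density and preservation of joins.

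The main obstacle is the cyclic and involutive identities, because they combine the operator $\cdot$ (extended by $\sigma$) with the dual operators $\ld$ and $\rd$ (extended by $\pi$). The central tool is the residuation adjointness
\[
y \leq x \ld^\pi z \iff x \cdot^\si y \leq z \iff x \leq z \rd^\pi y,
\]
which holds throughout $\m{A}^\si$ and pins $\ld^\pi$ and $\rd^\pi$ down uniquely from $\cdot^\si$. Writing $\lambda(x) = 0 \rd^\pi (x \ld^\pi 0)$ and $\rho(x) = (0 \rd^\pi x) \ld^\pi 0$, both maps are closure operators arising from the Galois connection $a \mapsto a \ld^\pi 0 \dashv b \mapsto 0 \rd^\pi b$. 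These closure operators restrict to their $A$-counterparts on $A$ (Lemma~\ref{sigma-pi-ext}), where by hypothesis $\lambda = \rho$ (cyclic case) or $\lambda = \rho = \mathrm{id}$ (involutive case). The plan is to promote these equalities from $A$ to all of $A^\si$ by showing that $\lambda$ and $\rho$ are continuous with respect to the directed meets and joins of closed and open elements approximating arbitrary elements of $A^\si$, which follows from the adjointness together with the preservation of non-empty joins by $\cdot^\si$; density then closes the argument.
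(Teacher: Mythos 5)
Your first two groups are fine and, for what it is worth, go further than the paper does: the paper's own proof simply declares all cases to be corollaries of Chapter~6 of the Galatos--Jipsen--Kowalski--Ono monograph and works out only the Hamiltonian identity as an illustration, using exactly the mechanism you use for your second group (Lemma~\ref{operators}(c) to identify the $\si$-extension of a composite of $\cdot$, $\mt$, $\jn$, $\e$ with the composite of the extensions, plus the fact that pointwise equal or comparable maps on $\m{A}$ have equal or comparable $\si$-extensions). The constant, bounding, and distributivity cases are also handled correctly.

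The gap is in your third group. For the cyclic and involutive identities you state a \emph{plan} rather than an argument, and the step you defer --- that $\lambda$ and $\rho$ are ``continuous with respect to the directed meets and joins of closed and open elements,'' so that their agreement on $A$ propagates to $A^\si$ --- is precisely the non-trivial content, and it does not follow in the way you assert. The maps $\lambda$ and $\rho$ are composites of two antitone dual operators ($x\mapsto x\ld^\pi 0$ turns joins into meets, and so does $y\mapsto 0\rd^\pi y$), so the composite preserves neither the meets nor the joins used in the density argument; moreover, Lemma~\ref{isotone} and Lemma~\ref{operators}(c) only control extensions of composites when the outer map is an operator (or dual operator) and the dualisations of coordinates match, which is exactly what fails here. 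Indeed the paper warns that ``extensions of arbitrary maps do not behave well under composition,'' and the equality of the term function $(0\rd(x\ld 0))^{\m{A}^\si}$ (computed by composing $\pi$-extensions) with $((0\rd(x\ld 0))^{\m{A}})^\pi$ is not automatic. A correct completion would argue, e.g., that in the involutive case the two negations are mutually inverse order-isomorphisms between $\m{A}$ and $\m{A}^\partial$, hence extend to mutually inverse order-isomorphisms of the canonical extensions (after checking that fixing the coordinate $0$ in the binary $\pi$-extension yields the $\pi$-extension of the unary map); the cyclic case needs a separate argument. As written, the residuation adjointness and the join-preservation of $\cdot^\si$ that you invoke do not by themselves deliver the needed continuity, so this part of the proof is incomplete.
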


\begin{proof}
All these claims are corollaries of the results in Chapter~6 of~\cite{GJKO07}, but to give the reader an idea of how the proofs proceed, we give a proof for the Hamiltonian case here. Suppose that $\m{A} \models (\e\mt x)^ky\eq y(\e\mt x)^k$ for some $k \in \N$. We will show that this identity also holds in $\m{A}^\si$. 

Since multiplication is an operator and $\mt$ is order-preserving, Lemma~\ref{operators} applies (recursively) to the terms $(\e\mt x)^ky$ and $y(\e\mt x)^k$. This yields
\begin{align*}
((\e\mt^\si x)^\si)^k\cdot^\si y
  &= ((\e\mt x)^k)^\si\cdot^\si y)\\    
  &= ((\e\mt x)^k y)^\si\\
  &= (y(\e\mt x)^k)^\si\\
  &= y \cdot^\si((\e\mt^\si x)^\si)^k,
\end{align*}                                   
showing that  $\m{A}^\si\models (\e\mt x)^ky\eq y(\e\mt x)^k$ as required. 
\end{proof}

\begin{Lemma}\label{l:semilinear}
Let $\V$ be a variety of $\FL$-algebras that is closed under canonical extensions. Then also the variety $\V^\ell$ of {\em semilinear} algebras generated by the linearly ordered members of $\V$ is closed under canonical extensions.
\end{Lemma}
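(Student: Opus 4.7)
The plan is to exploit an equational axiomatization of $\V^\ell$ and verify that the defining identities survive passage to canonical extensions. Since $\V^\ell \subseteq \V$ and $\V$ is closed under canonical extensions, any $\m{A}\in\V^\ell$ satisfies $\m{A}^\si\in\V$, so it remains to show $\m{A}^\si$ is semilinear. For this I would invoke the standard axiomatization (see, e.g., Chapter~3 of~\cite{GJKO07}): semilinearity for $\FL$-algebras is captured, relative to all $\FL$-algebras, by a family of \emph{prelinearity} identities of the form
\[
\e \leq \gamma(x\ld y)\jn \gamma(y\ld x),
\]
where $\gamma$ ranges over iterated conjugation terms built from $\lambda_u(z) = u\ld(zu)\mt \e$ and $\rho_v(z) = (vz)\rd v\mt\e$. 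In the commutative or integral case a single identity suffices; in general one needs the full family. Thus $\V^\ell$ is precisely the subclass of $\V$ satisfying every such identity, and the task reduces to lifting each prelinearity identity from $\m{A}$ to $\m{A}^\si$.

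For the verification I would use the machinery of Section~\ref{s:canonical}. The lattice operations and the constant $\e$ are stable (Lemma~\ref{meet-is-meet}); multiplication is an operator and so uses $\cdot^\si$, preserving nonempty joins in each coordinate (Lemma~\ref{operators}); the divisions $\ld$ and $\rd$ are dual operators, naturally equipped with their $\pi$-extensions $\ld^\pi$ and $\rd^\pi$, satisfying the dual statements of Lemma~\ref{operators}. Given $x,y\in A^\si$, one approximates them via density by meets of open and joins of closed elements of $\m{A}$, pushes these approximations through the composition defining $\gamma$ using the good behaviour of $(\cdot)^\si$ and $(\cdot)^\pi$ on $K\cup O$ (Lemma~\ref{ord-pres-maps}), and invokes compactness to reduce the inequality in $\m{A}^\si$ to the corresponding inequality in $\m{A}$, where it holds by hypothesis.

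The main obstacle is the delicate interplay between $\si$- and $\pi$-extensions forced by the fact that each $\gamma$ mixes multiplications (operators, evaluated $\si$) with divisions (dual operators, evaluated $\pi$). Composition of such mixed extensions does not respect equality in general, but fortunately the prelinearity identity is an inequality rather than an equality, and Lemma~\ref{isotone} supplies the one-sided comparisons between extensions of compositions and compositions of extensions needed to propagate $\leq$ through iterated conjugates. This is the same pattern employed in Lemma~\ref{can-rls}, particularly in the Hamiltonian case, and in the detailed treatment of residuated structures in Chapter~6 of~\cite{GJKO07}.
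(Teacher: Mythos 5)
Your overall strategy --- axiomatize $\V^\ell$ relative to $\V$ by prelinearity identities and check that each identity survives canonical extension --- is genuinely different from the paper's proof, but it stalls at exactly the point you flag as the ``main obstacle''. To preserve an inequality $\e \le t(x,y)$ under canonical extension by the expanding/contracting method, you need the right-hand term $t(x,y)=\gamma(x\ld y)\jn\gamma(y\ld x)$ to be expanding, i.e.\ $(t^{\m{A}})^\si \le t^{\m{A}^\si}$. Lemma~\ref{isotone} (and hence Corollary~\ref{clone}) supplies such one-sided comparisons only for \emph{isotone} maps with a \emph{common} dualisation of coordinates. But $t$ is not isotone under any dualisation: $x$ occurs negatively in $x\ld y$ and positively in $y\ld x$ (and dually for $y$), so the two disjuncts force incompatible dualisations of the same coordinate, the closed and open approximants of the two domains do not match up, and the needed inequality $(\gamma(x\ld y)\jn\gamma(y\ld x))^\si\le\gamma^{\m{A}^\si}(x\ld^\pi y)\jn\gamma^{\m{A}^\si}(y\ld^\pi x)$ is not delivered by any lemma in Section~\ref{s:canonical}. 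This is not a presentational quibble: prelinearity is precisely the kind of non-isotone (non-Sahlqvist-like) identity whose canonicity cannot be obtained by pushing approximations through compositions, which is why the known canonicity proofs for semilinear varieties do not proceed identity by identity. (A secondary point: the axiomatization you quote lives in Chapter~9, not Chapter~3, of~\cite{GJKO07}, and its exact form with iterated conjugates would have to be pinned down before any such verification could even begin.)

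The paper avoids all of this with a structural argument: writing $\C$ for the class of chains in $\V$, one has $\V^\ell=\op{ISP}(\C)=\op{ISP}_B\op{P}_U(\C)=\op{ISP}_B(\C)$, since every direct product is a Boolean product of ultraproducts and ultraproducts of chains are chains; canonical extensions commute with subalgebras and with Boolean products (Lemma~6.7 of~\cite{GH01}), and canonical extensions of chains in $\V$ are again chains in $\V$, so $(\V^\ell)^\si\subseteq\op{ISP}_B(\C)=\V^\ell$. This is essentially Theorem~6.8 of~\cite{GH01}. If you wish to keep your syntactic route, you would need to actually prove canonicity of the prelinearity identities, which is a substantial task in its own right and does not follow from the lemmas you cite.
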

\begin{proof}
This result follows directly from Theorem~6.8 in~\cite{GH01}, but let us sketch a proof. Let $\V$ be a variety of $\FL$-algebras that is closed under canonical extensions. Then $\V^\ell = \op{ISP}(\C)$ where $\C$ is the class of all chains (linearly ordered members) of $\V$. Any direct product can be represented as a Boolean product of ultraproducts, so $\V^\ell = \op{ISP}_B\op{P}_U(\C)$, and, since ultraproducts of chains are chains, $\V^\ell = \op{ISP}_B(\C)$. Now, by Lemma~6.7 in~\cite{GH01}, canonical extensions commute with Boolean products, so $(\V^\ell)^\si = \op{ISP}_B(\C)^\si$. But canonical extensions of chains are also chains, so $(\V^\ell)^\si = \op{ISP}_B(\C) = \V^\ell$, as required. 
\end{proof}

It was shown in~\cite{Gal03} that a Hamiltonian variety of residuated lattices $\V$ has the EDPC if and only if $\V\models (\e\mt x)^{n+1}\eq (\e\mt x)^{n}$ for some $n\in\N$. Let $t(x) = (\e\mt x)^2$. Since $t$ is decreasing and expanding in all residuated lattices, an application of Theorem~\ref{t:basic} yields the following result.

\begin{Theorem}\label{t:rls}
Let $\V = \op{ISP}(\C)$ be a coherent variety of residuated lattices such that $\C$ is closed under canonical extensions. Then $\V\models (\e\mt x)^{n+1}\eq (\e\mt x)^{n}$ for some $n \in\N$ and if $\V$ is also Hamiltonian, then $\V$ has the EDPC.
\end{Theorem}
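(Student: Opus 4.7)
The plan is to apply Theorem~\ref{t:general} to the term $t(x) := (\e \mt x)^2$, following the template of Theorem~\ref{t:basic}. First I would check the three standing hypotheses in any residuated lattice. The term $t$ is decreasing because $\e \mt x \leq \e$ gives $(\e \mt x)^2 \leq (\e \mt x)\cdot \e = \e \mt x \leq x$. It is order-preserving, since $\mt$, $\cdot$, and the constant $\e$ are all order-preserving. It is $\V$-stable, and hence $\V$-expanding, because $\mt$ is stable by Lemma~\ref{meet-is-meet}, multiplication is stable as an operator by Lemma~\ref{operators}(c), the constant $\e$ is trivially stable, and Corollary~\ref{clone} lets us assemble these into stability of the composite term $t$.

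The one point where Theorem~\ref{t:basic} cannot be invoked off-the-shelf is that it requires $\V$ itself to be closed under canonical extensions, whereas here only the generating class $\C$ is. To bridge this gap I would verify the fixpoint embedding condition~(FE) of Theorem~\ref{t:general} directly. Given a finitely generated $\m{A} \in \V$ and $a \in A$, use $\V = \op{ISP}(\C)$ to embed $\m{A}$ into a product $\m{P} = \prod_{i \in I}\m{C}_i$ with each $\m{C}_i \in \C$, and then compose with the coordinate-wise embeddings $\m{C}_i \hookrightarrow \m{C}_i^\si$ to obtain an embedding of $\m{A}$ into $\m{B} = \prod_{i \in I}\m{C}_i^\si$. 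Since $\C$ is closed under canonical extensions, each $\m{C}_i^\si \in \C$, so $\m{B} \in \op{P}(\C) \subseteq \V$. Writing $a = (a_i)_{i \in I}$, the meet $c = \bigwedge_{k \in \N} t^k(a)$ is computed coordinate-wise in the complete algebra $\m{B}$. In each coordinate the set $X_i = \{t^k(a_i) \mid k \in \N\}$ is a descending chain in $\m{C}_i$ closed under $t^{\m{C}_i}$ with $t^{\m{C}_i}$ decreasing on it, so Lemma~\ref{exp-fix} yields $t^{\m{C}_i^\si}(\bigwedge X_i) = \bigwedge X_i$. Hence $t(c) = c$ in $\m{B}$, establishing (FE).

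Theorem~\ref{t:general} then produces some $n \in \N$ with $\V \models t^{n+1}(x) \eq t^n(x)$. A routine induction on $k$ shows $t^k(x) = (\e \mt x)^{2^k}$, so with $m = 2^n$ we obtain $\V \models (\e \mt x)^{2m} \eq (\e \mt x)^m$. Since $\e \mt x \leq \e$, the sequence of powers $(\e \mt x)^j$ is non-increasing under multiplication, so equality at the $m$-th and $2m$-th powers forces every intermediate power to coincide with $(\e \mt x)^m$; in particular $\V \models (\e \mt x)^{m+1} \eq (\e \mt x)^m$, which (after renaming $m$ as $n$) is exactly the identity required. If $\V$ is moreover Hamiltonian, the characterization of EDPC recalled from~\cite{Gal03} immediately gives the second claim. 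The main obstacle is precisely the mismatch between \emph{$\C$ closed under canonical extensions} and \emph{$\V$ closed under canonical extensions}; I sidestep it by staying inside a product of canonical extensions of members of $\C$, which sits inside $\V$ because $\V = \op{ISP}(\C)$ is a variety.
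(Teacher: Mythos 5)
Your proof is correct and follows the paper's intended route: the paper's entire argument for this theorem is the observation that $t(x)=(\e\mt x)^2$ is decreasing and expanding in all residuated lattices, followed by ``an application of Theorem~\ref{t:basic}''. Your extra work is well spent, though, since Theorem~\ref{t:basic} as stated requires $\V$ itself to be closed under canonical extensions, which does not follow automatically from closure of the generating class $\C$ (Lemma~\ref{HSPfin-commut} only covers finite products); your direct verification of (FE) inside $\prod_{i}\m{C}_i^\si\in\op{P}(\C)\subseteq\V$ via Lemma~\ref{exp-fix}, together with the explicit passage from the identity $t^{n+1}(x)\eq t^n(x)$ (i.e., $2^{n}$-th powers) to $(\e\mt x)^{m+1}\eq(\e\mt x)^m$ using the decreasing chain of powers, makes precise the steps the paper leaves implicit.
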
  

\begin{Corollary}
Let $\V = \op{ISP}(\C)$ be any variety of residuated lattices such that $\C$ is closed under canonical extensions and $\V \not\models (\e\mt x)^{n+1}\eq (\e\mt x)^{n}$ for all $n \in\N$. Then $\V$ is not coherent and does not admit uniform deductive interpolation, and its first-order theory does not have a model completion.
\end{Corollary}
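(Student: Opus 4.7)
The plan is to derive this corollary directly by contraposition from Theorem~\ref{t:rls}, together with the general characterizations of uniform deductive interpolation and of model completions already recorded in the paper. Nothing new needs to be computed; the work has already been done in the preceding theorems and the machinery of Section~\ref{s:interpolation}, so the proof should be short.

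First, I would argue that $\V$ is not coherent. Suppose, toward a contradiction, that it is. By hypothesis $\V = \op{ISP}(\C)$ with $\C$ closed under canonical extensions, so the assumptions of Theorem~\ref{t:rls} are met. That theorem then yields some $n \in \N$ with $\V \models (\e \mt x)^{n+1} \eq (\e \mt x)^n$, contradicting the standing assumption on $\V$. Hence $\V$ is not coherent.

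Next, I would invoke the Proposition immediately preceding Section~\ref{s:criterion}, which states that a variety admits right uniform deductive interpolation if and only if it is both coherent and admits deductive interpolation. Since coherence fails for $\V$, right uniform deductive interpolation must fail too. Finally, the discussion in the introduction recalls Wheeler's result from~\cite{Whe76}: coherence of $\V$ is a necessary condition for the first-order theory of $\V$ to have a model completion. Since $\V$ is not coherent, its first-order theory does not have a model completion.

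The only step requiring any care is verifying that Theorem~\ref{t:rls} genuinely applies in the form needed here, but the hypotheses are stated identically, so no extra argument is required. In this sense there is really no obstacle: this corollary is a packaging of Theorem~\ref{t:rls} via the characterization of uniform deductive interpolation (Proposition at the end of Section~\ref{s:interpolation}) and Wheeler's observation on model completions.
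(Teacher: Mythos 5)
Your proposal is correct and matches the intended (omitted) argument exactly: the corollary is the contrapositive of Theorem~\ref{t:rls}, combined with the proposition that right uniform deductive interpolation is equivalent to coherence plus deductive interpolation, and Wheeler's observation that the existence of a model completion implies coherence. The paper states this corollary without proof precisely because it is this immediate packaging, so there is nothing to add.
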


The lattice-dual form of Theorem~\ref{t:rls} also holds. That is, if $\V$ satisfies the assumptions of the theorem, then $\V\models (\e\jn x)^{m+1}\eq (\e\jn x)^{m}$ for some $m \in\N$. Of the varieties mentioned in Lemma~\ref{can-rls}, those that satisfy both $(\e\mt x)^{n+1}\eq (\e\mt x)^{n}$ for some $n\in\N$, and $(\e\jn x)^{m+1}\eq (\e\jn x)^{m}$ for some $m\in\N$, are term-equivalent to Heyting algebras. Hence, we obtain failures of coherence, uniform deductive interpolation, and existence of a model completion for the first-order theory for varieties of residuated lattices corresponding to all `fundamental' substructural logics, including $\lgc{FL}$, $\lgc{FL_c}$, $\lgc{FL_e}$, $\lgc{FL_w}$, and $\lgc{FL_{ew}}$, and their involutive versions, including $\lgc{MALL}$, the fragment of Linear Logic without exponentials.


\subsection{Varieties of lattices} \label{ss:lattices}

Here we remove the assumption that the signature contains at least one constant. Let $\lang$ be an arbitrary algebraic signature, and let $\V$
be a variety of $\lang$-algebras. The presence or absence of a constant does not affect the definition of coherence, but Proposition~\ref{p:characterization} and hence Theorem~\ref{th:coherent} are not quite correct in this more general setting. Rather than reformulating these results in their entirety, let us just extract the one result that we need. From the proof of Theorem~\ref{th:coherent}, we obtain directly that the following are equivalent:

\begin{newlist}
\item[\rm (1)] $\V$ is coherent.
\item[\rm (2)] For any finite sets $\x,\y$ such that $\F(\y)$ exists and any compact congruence $\The$ on $\F(\x,\y)$, the congruence $\The \cap F(\y)^2$  on $\F(\y)$ is compact.
\end{newlist}

Having removed the requirement that our signature contains constants, let us now allow them to be added. That is, we provide a lemma that allows parameters to be considered as additional constants in an extended signature. Let $\csts$ be a finite non-empty set of constants not in $\lang$. We write $\lang({\csts})$  to denote $\lang$ expanded with $\csts$ and let $\V_{\csts}$ be the variety in this signature defined by all identities that are valid in $\V$. 

\begin{Lemma}\label{l:constants}
If $\V$ is coherent, then so is $\V_{\csts}$.   
\end{Lemma}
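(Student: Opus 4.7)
The plan is to move between the two signatures by exploiting the canonical identification of free algebras. Fix a finitely presented $\V_{\csts}$-algebra $\m{B}$ together with a finitely generated $\V_{\csts}$-subalgebra $\m{A}$ of $\m{B}$; I want to show that $\m{A}$ is finitely presented in $\V_{\csts}$. The starting observation is that, since $\V_{\csts}$ is defined by the identities valid in $\V$, the free $\V_{\csts}$-algebra $\F_{\V_{\csts}}(\x)$ over a set $\x$ and the free $\V$-algebra $\F_{\V}(\x \cup \csts)$ over $\x \cup \csts$ coincide as $\lang$-algebras: both are the quotient of the $\lang(\csts)$-term algebra on $\x$ by the identities of $\V$, with the constants from $\csts$ playing the role of distinguished free generators in the latter description. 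Under this identification, compact congruences on $\F_{\V_{\csts}}(\x)$ correspond bijectively to compact congruences on $\F_{\V}(\x \cup \csts)$.

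Next I would translate the finite presentation of $\m{B}$ across this identification. If $\m{B} \cong \F_{\V_{\csts}}(\x)/\The$ with $\x$ finite and $\The$ compact, then $\m{B}$ (viewed as an $\lang$-algebra) is isomorphic to $\F_{\V}(\x \cup \csts)/\The$, hence is a finitely presented member of $\V$. Crucially, any $\V_{\csts}$-subalgebra of $\m{B}$ contains the interpretations $c^{\m{B}}$ for every $c \in \csts$, because these are $0$-ary operations of $\V_{\csts}$. So if $\y$ is a finite $\V_{\csts}$-generating set for $\m{A}$, then $\y \cup \{c^{\m{B}} : c \in \csts\}$ is a finite $\V$-generating set for $\m{A}$ viewed as a subalgebra of the $\lang$-reduct of $\m{B}$.

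Applying coherence of $\V$ to this inclusion yields a finite presentation of $\m{A}$ in $\V$, that is, a compact congruence $\Psi$ on $\F_{\V}(\y \cup \csts)$ with $\F_{\V}(\y \cup \csts)/\Psi \cong \m{A}$. Translating back through the identification $\F_{\V_{\csts}}(\y) \cong \F_{\V}(\y \cup \csts)$, the congruence $\Psi$ becomes a compact congruence on $\F_{\V_{\csts}}(\y)$ with quotient isomorphic to $\m{A}$ as a $\V_{\csts}$-algebra. This exhibits $\m{A}$ as a finitely presented member of $\V_{\csts}$, as required.

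The main point to check carefully is that the identification between $\F_{\V_{\csts}}(\x)$ and $\F_{\V}(\x \cup \csts)$ really does preserve compactness of congruences and restricts correctly along the inclusion of generating sets; this is a routine bookkeeping check, but it is the only place where anything nontrivial happens. Everything else is a direct transfer via the fact that $\V$ and $\V_{\csts}$ have exactly the same equational theory once the constants in $\csts$ are treated as variables.
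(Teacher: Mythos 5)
Your argument is correct, and it rests on exactly the same key identification as the paper's proof: treating the constants in $\csts$ as extra free generators, so that $\F_{\csts}(\x)$ and $\F(\csts,\x)$ coincide and have the same compact congruences. The difference is which formulation of coherence you feed this into. The paper uses the congruence characterization restated just before the lemma (condition (2): the restriction of a compact congruence on $\F(\x,\y)$ to $F(\y)^2$ is compact), which makes the transfer immediate --- a compact congruence $\The$ on $\F_{\csts}(\x,\y)$ is literally a compact congruence on $\F(\csts,\x,\y)$, its restriction to $F(\csts,\x)^2=F_{\csts}(\x)^2$ is compact by coherence of $\V$, and that is the entire proof. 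You instead work from the definitional form (finitely generated subalgebras of finitely presented algebras), which also works but costs two extra observations: first, that every $\lang(\csts)$-subalgebra contains the interpretations of the constants, so it is finitely generated as an $\lang$-algebra by the old generators together with these interpretations (you handle this correctly); and second, that a finitely presented algebra generated by a finite set $G$ admits a finite presentation \emph{over $G$ itself}. This second point is needed when you assert that coherence of $\V$ hands you a compact congruence on $\F_{\V}(\y\cup\csts)$ specifically, rather than on a free algebra over some unrelated finite set; it is exactly Lemma~\ref{l:con-gen} and should be cited there rather than folded into ``routine bookkeeping''. With that reference supplied, your proof is complete; the paper's route is simply shorter because the congruence formulation already has the generating set built in.
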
  
\begin{proof}
Let $\F_{\csts}(\x,\y)$ denote the free algebra of $\V_{\csts}$ over finite disjoint sets $\x,\y$. If we view $\csts$ as a set of variables, then clearly also $F_{\csts}(\x,\y) = F({\csts},\x,\y)$. Now suppose that $\V$ is coherent and let $\De(\x,\y)$ be a finite set of $\lang({\csts})$-equations. Define $\The = \cg{\F_{\csts}(\x,\y)}\De$. Since $\De$ can also be viewed as a set $\De(\csts,\x,\y)$ of $\lang$-equations, $\The = \cg{\F(\csts,\x,\y)}\De$. By the coherence of $\V$, we have that $\The\cap F(\csts,\x)^2$ is compact. But $F(\csts,\x) = F_{\csts}(\x)$, so $\The\cap F_{\csts}(\x)^2$ is also compact, as required.
\end{proof}  

Our final negative result, which makes use of the preceding observations, concerns the variety $\Lat$ of lattices. Let us expand the signature of lattices with the set $\csts = \{c_1,c_2,c_2\}$ of constants, and consider $\Lat_{\csts}$,  whose members we call $\csts$-lattices. Define now 
\[
t(x) = (c_1\mt(c_2\jn(c_3\mt x))) \mt x.
\]
Then $\Lat_{\csts} \models t(x)\leq x$, and $t$ is obviously order-preserving in $\V_{\csts}$. 

\begin{Lemma}\label{c-lattice}
Let $\W$ be a coherent variety of $\csts$-lattices closed under canonical extensions. Then $t$ is $n$-potent for some $n \in \N$. 
\end{Lemma}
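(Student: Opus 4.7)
The plan is to apply Theorem~\ref{t:basic} to $\W$ and the unary term $t(x) = (c_1\mt(c_2\jn(c_3\mt x)))\mt x$, so the task reduces to verifying the hypotheses of that theorem. Three of them are essentially immediate: $\W$ is a variety of semilattice-ordered algebras (the meet gives the required term-definable meet-semilattice reduct), $\W$ is coherent and closed under canonical extensions by assumption, and, as remarked just before the lemma, $\W \models t(x)\le x$ and $t$ is $\W$-isotone. Note that although the generic version of Theorem~\ref{th:coherent} assumed a constant in the signature, the equivalence between coherence and condition (2) of Proposition~\ref{p:characterization} still holds for $\W$ in the constant-expanded signature $\lang(\csts)$, so Theorem~\ref{t:basic} is genuinely available here.

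The one condition that requires a moment's work is that $t$ is $\W$-expanding. For this I would argue that $t$ is in fact $\W$-\emph{stable}, from which expansiveness is immediate. By Lemma~\ref{meet-is-meet}, for any $\m{A}\in\W$ the operations $\mt^\si$ and $\jn^\si$ coincide with the meet and join of $\m{A}^{\!\si}$; in particular, both $\mt$ and $\jn$ are $\W$-stable. The constants $c_1,c_2,c_3$ are nullary operations, and their canonical extensions are the same designated elements of $\m{A}^{\!\si}$, so they are stable as well. Since stability implies expansiveness, and by Corollary~\ref{clone} the $\W$-expanding terms form a clone (which contains projections, the lattice operations, and the constants), the composite term $t$ is $\W$-expanding.

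With all the hypotheses of Theorem~\ref{t:basic} verified for $\W$ and $t$, the theorem yields an $n\in\N$ with $\W\models t^{n+1}(x)\eq t^n(x)$; that is, $t$ is $n$-potent in $\W$, as required. The main subtlety, rather than a real obstacle, is checking that constants and composite terms interact correctly with canonical extensions; this is handled cleanly by Lemma~\ref{meet-is-meet} together with Corollary~\ref{clone}, so no case analysis or explicit computation of $(t^{\m{A}})^\si$ is needed.
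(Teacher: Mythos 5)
Your proposal is correct and follows essentially the same route as the paper: the paper's proof is simply that $t$, being a composition of lattice terms and constants, is $\W$-expanding by Lemma~\ref{isotone}, whence Theorem~\ref{t:basic} applies. Your more detailed verification via Lemma~\ref{meet-is-meet} and Corollary~\ref{clone} (stability of $\mt$, $\jn$, and the constants, plus the clone property of expanding terms) is just an unpacking of that same argument.
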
  
\begin{proof}
Since $t$ is a composition of lattice terms and constants, by Lemma~\ref{isotone}, it is $\W$-expanding. The claim then follows by Theorem~\ref{t:basic}.
\end{proof}

\begin{Corollary}\label{latt-fail}
The variety of lattices is not coherent and does not admit deductive uniform interpolation, and its first-order theory does not have a model completion.
\end{Corollary}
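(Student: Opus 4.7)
The plan is to prove Corollary~\ref{latt-fail} by contradiction: assume $\Lat$ is coherent, use Lemma~\ref{l:constants} and Lemma~\ref{c-lattice} to force $t$ to satisfy an $n$-potency identity, and refute this by exhibiting a $\csts$-lattice in which iteration of $t$ produces a strictly descending chain. Assume first that $\Lat$ is coherent; then by Lemma~\ref{l:constants}, $\Lat_{\csts}$ is coherent as well. Moreover, $\Lat_{\csts}$ is closed under canonical extensions: by Lemma~\ref{meet-is-meet}, the $\sigma$- and $\pi$-extensions of $\mt$ and $\jn$ in any lattice coincide with the meet and join in the completion, while the nullary operations interpreting $c_1, c_2, c_3$ extend trivially to themselves. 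Thus all hypotheses of Lemma~\ref{c-lattice} are satisfied, so $t$ must be $n$-potent in $\Lat_{\csts}$ for some $n \in \N$.

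To reach a contradiction, I would produce a single $\csts$-lattice $\m{A}$ together with an element $a \in A$ such that $\{t^k(a)\}_{k \in \N}$ is strictly decreasing. The natural candidate is the free lattice $\F(c_1, c_2, c_3, x)$ on four generators, regarded as a $\csts$-lattice by freezing the first three generators as constants and taking $a = x$. Using Whitman's solution to the word problem for free lattices --- which characterises precisely when a meet of terms lies below a join of terms --- one verifies inductively that $t^{k+1}(x) < t^{k}(x)$ strictly for every $k \in \N$, so no identity of the form $t^{n+1}(x) \eq t^{n}(x)$ can hold in $\Lat_{\csts}$. This contradicts Lemma~\ref{c-lattice}, and therefore $\Lat$ is not coherent. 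The main obstacle in this plan is precisely the Whitman-style strict-descent verification, which is in effect the content of Schmidt's original argument cited in the introduction; an alternative would be to exhibit a concrete infinite $\csts$-lattice in which iteration of $t$ from $x$ visibly fails to stabilise.

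With the failure of coherence established, the remaining two assertions follow from the general results of the opening sections. The proposition concluding Section~\ref{s:interpolation} characterises right uniform deductive interpolation as coherence together with deductive interpolation, so coherence is a necessary condition for right uniform deductive interpolation and $\Lat$ does not admit it. Moreover, Wheeler's theorem recalled in the introduction states that the existence of a model completion for the first-order theory of a variety implies its coherence; consequently the first-order theory of $\Lat$ has no model completion.
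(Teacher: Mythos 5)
Your proposal is correct and follows essentially the same route as the paper: reduce to $\Lat_{\csts}$ via Lemma~\ref{l:constants}, note closure under canonical extensions, and apply Lemma~\ref{c-lattice} against the failure of $n$-potency of $t$, with the remaining two claims following from the proposition closing Section~\ref{s:interpolation} and Wheeler's theorem. The only difference is that you sketch a justification (free lattice on four generators plus Whitman's condition) for the fact that $t$ is not $n$-potent in $\Lat_{\csts}$, which the paper simply asserts; that is a reasonable and standard way to fill the gap.
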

\begin{proof}
Since $\Lat$ is closed under canonical extensions, so is $\Lat_{\csts}$. Moreover, $\Lat_{\csts} \not \models t^{n}(x)\leq t^{n+1}(x)$ for all $n\in\N$. Hence, by Lemma~\ref{c-lattice}, $\Lat_{\csts}$ is not coherent, and, by Lemma~\ref{l:constants}, neither is $\Lat$. 
\end{proof}

The above  negative result for lattices was first proved explicitly by Schmidt in~\cite{Sch83}, although he notes in this paper that an example exhibiting the failure of coherence was known already to McKenzie but unpublished.


\bibliographystyle{model1a-num-names}

\begin{bibdiv}
\begin{biblist}
\bib{ADO14}{article}{
  title={Uniform Interpolation in Substructural Logics},
  author={Alizadeh, M.},
  author={Derakhshan, F.},
  author={Ono, H.},
  journal={Rev. Symb. Log.},
  volume={7},
  number={3},
  pages={455--483},
  year={2014},
}

\bib{Bil07}{article}{
  title={Uniform Interpolation and Propositional Quantifiers in Modal Logics},
  author={B{\'i}lkova, M.},
  journal={Studia Logica},
  volume={85},
  pages={1--31},
  year={2007},
}

\bib{CLL73}{article}{
  author={Choo, K. G.},
  author={Lam, K. Y.},
  author={Luft, E.},
  title={On free product of rings and the coherence property},
  booktitle={Algebraic {K}-theory, {II}: ``{C}lassical'' algebraic {K}-theory and connections with arithmetic ({P}roc. {C}onf., {B}attelle {M}emorial {I}nst., {S}eattle, {W}ash., 1972)},
  pages={135--143. Lecture Notes in Math., Vol. 342},
  publisher={Springer, Berlin},
  year={1973},
}

\bib{Cze85}{article}{
  author={Czelakowski, J.},
  title={Sentential logics and Maehara interpolation property},
  journal={Studia Logica},
  date={1985},
  volume={44},
  number={3},
  pages={265--283},
}

\bib{Cze07}{article}{
  author={Czelakowski, J.},
  title={Fregean logics and the strong amalgamation property},
  journal={Bull. Sect. Logic},
  date={2007},
  volume={26},
  number={3/4},
  pages={105--116},
}

\bib{ES70}{article}{
  author={Eklof, P.},
  author={Sabbagh, G.},
  title={Model-completions and modules},
  journal={Ann. Math. Logic},
  volume={2},
  date={1970/1971},
  number={3},
  pages={251--295},
}

\bib{Gal03}{thesis}{
  author={Galatos, N.},
  title={Varieties of residuated lattices},
  type={Ph.D. thesis},
  organization={Vanderbilt University},
  address={Nashville, Tennessee},
  date={2003},
}

\bib{GJKO07}{book}{
  author={Galatos, N.},
  author={Jipsen, P.},
  author={Kowalski, T.},
  author={Ono, H.},
  title={Residuated Lattices: An Algebraic Glimpse at Substructural Logics},
  series={Studies in Logic and the Foundations of Mathematics},
  number={151},
  publisher={Elsevier},
  date={2007},
}

\bib{GH01}{article}{
  author={Gehrke, M.},
  author={Harding, J.},
  title={Bounded lattice expansions},
  journal={J. Algebra},
  volume={238},
  date={2001},
  number={1},
  pages={345--371},
  issn={0021-8693},
}

\bib{GZ02}{book}{
  address={Dordrecht},
  author={Ghilardi, S.},
  author={Zawadowski, M.},
  publisher={Springer},
  title={{Sheaves, Games, and Model Completions}: A Categorical Approach to Nonclassical Propositional Logics},
  year={2002},
}

\bib{GZ97}{article}{
  author={Ghilardi, S.},
  author={Zawadowski, M.},
  title={Model completions and r-{H}eyting categories},
  journal={Ann. Pure Appl. Logic},
  volume={88},
  number={1},
  pages={27--46},
  year={1997},
}

\bib{vGMT17}{article}{
  author={van Gool. S.},
  author={Metcalfe, G.},
  author={Tsinakis, C.},
  title={Uniform Interpolation and Compact Congruences},
  journal={Ann. Pure Appl. Logic},
  volume={168},
  pages={1827--1948},
  year={2017},
}

\bib{Gou92}{article}{
  author={Gould, V.},
  title={Coherent monoids},
  journal={J. Austral. Math. Soc. Ser. A},
  fjournal={Australian Mathematical Society. Journal. Series A. Pure Mathematics and Statistics},
  volume={53},
  year={1992},
  number={2},
  pages={166--182},
}

\bib{Jon65}{article}{
  author={J\'onsson, B.},
  title={Extensions of relational structures},
  book={ title={Proc. International Symposium on the Theory of Models}, address={Berkeley}, },
  date={1965},
  pages={146-157},
}

\bib{Hig61}{article}{
  author={Higman, G.},
  title={Subgroups of finitely presented groups},
  journal={Proc. Roy. Soc. Ser. A},
  volume={262},
  date={1961},
  pages={455--475},
}

\bib{Jon94}{article}{
  author={J\'onsson, B.},
  title={On the canonicity of Sahlqvist identities},
  journal={Studia Logica},
  volume={53},
  year={1994},
  pages={473--491},
}

\bib{KS95}{article}{
  author={Kharlampovich, O.},
  author={Sapir, M.},
  title={Algorithmic problems in varieties, a survey},
  journal={Internat. J. Algebra Comput.},
  volume={12},
  year={1995},
  pages={379--602.},
}

\bib{Kow98}{article}{
  author={Kowalski, T.},
  title={Varieties of tense algebras},
  journal={Rep. Math. Logic},
  number={32},
  date={1998},
  pages={53--95},
  issn={0137-2904},
}

\bib{KK06}{article}{
  author={Kowalski, T.},
  author={Kracht, M.},
  title={Semisimple varieties of modal algebras},
  journal={Studia Logica},
  fjournal={Studia Logica. An International Journal for Symbolic Logic},
  volume={83},
  year={2006},
  number={1-3},
  pages={351--363},
}

\bib{KO16}{article}{
  title={Analytic cut and interpolation for bi-intuitionistic logic},
  journal={Rev. Symb. Log.},
  publisher={Cambridge University Press},
  author={Kowalski, T.},
  author={Ono, H.},
  year={2017},
  volume={10},
  number={2},
  pages={259--283},
}

\bib{Mak77}{article}{
  author={Maksimova, L.L.},
  title={Craig's theorem in superintuitionistic logics and amalgamable varieties of pseudo-Boolean algebras},
  journal={Algebra Logika},
  volume={16},
  date={1977},
  pages={643--681},
}

\bib{MMT14}{article}{
  title={Amalgamation and interpolation in ordered algebras},
  author={Metcalfe, G.},
  author={Montagna, F.},
  author={Tsinakis, C.},
  journal={J. Algebra},
  volume={402},
  pages={21--82},
  year={2014},
}

\bib{Ono86}{article}{
  author={Ono, H.},
  title={Interpolation and the Robinson property for logics not closed under the Boolean operations},
  journal={Algebra Universalis},
  volume={23},
  date={1986},
  pages={111--122},
}

\bib{Pig72}{article}{
  author={Pigozzi, D.},
  title={Amalgamations, congruence-extension, and interpolation properties in algebras},
  journal={Algebra Universalis},
  volume={1},
  date={1972},
  pages={269--349},
}

\bib{Pit92}{article}{
  author={Pitts, A.M.},
  title={On an interpretation of second-order quantification in first-order intuitionistic propositional logic},
  journal={J. Symbolic Logic},
  year={1992},
  volume={57},
  pages={33--52},
}

\bib{Sch83}{article}{
  author={Schmidt, P.H.},
  title={Algebraically complete lattices},
  journal={Algebra Universalis},
  volume={17},
  date={1983},
  pages={135--142},
}

\bib{Tay16}{article}{
  author={Taylor, C.J.},
  title={Discriminator varieties of double-Heyting algebras},
  journal={Rep. Math. Logic},
  number={51},
  date={2016},
  pages={3--14},
  issn={0137-2904},
}

\bib{Tay74}{article}{
  author={Taylor, W.},
  title={Uniformity of congruences},
  journal={Algebra Universalis},
  volume={4},
  year={1974},
  pages={342--360},
}

\bib{Vis96}{article}{
  title={Uniform interpolation and layered bisimulation},
  author={Visser, A.},
  book={ title={G{\"o}del '96: Logical foundations on mathematics, computer science and physics --- Kurt G{\"o}del's legacy}, editor={H{\'a}jek, P.}, publisher={Springer Verlag}},
  year={1996},
}

\bib{Whe76}{article}{
  author={Wheeler, W.H.},
  title={Model-companions and definability in existentially complete structures},
  journal={Israel J. Math.},
  year={1976},
  volume={25},
  number={3},
  pages={305--330},
}

\bib{Whe78}{article}{
  author={Wheeler, W.H.},
  title={A characterization of companionable, universal theories},
  journal={J. Symbolic Logic},
  year={1978},
  volume={43},
  number={3},
  pages={402--429},
}

\bib{Wro84a}{article}{
  author={Wro{\'n}ski, A.},
  title={On a form of equational interpolation property},
  book={ title={Foundations of Logic and Linguistics: Problems and their Solutions}, editor={Dorn, G.}, editor= {Weingartner, P.}, publisher={Springer}, address={Boston} },
  pages={23--29},
  date={1985},
}
\end{biblist}
\end{bibdiv}

\end{document}